\numberwithin{equation}{section}
\newcommand{\id}{\mathcal{I}}
\newcommand{\clos}[1]{\overline{#1}}
\newcommand{\dom}{\mathcal{D}}
\theoremstyle{theorem}
\newtheorem{theorem}{Theorem}[section]
\theoremstyle{definition}
\newtheorem{remark}[theorem]{{\bf Remark}}
\newtheorem{definition}[theorem]{Definition}
\newtheorem{problem}[theorem]{Problem}
\newcommand{\cc}{\mathbb{C}}
\newcommand{\hh}{\mathbb{H}}
\newcommand{\rr}{\mathbb{R}}
\renewcommand{\SS}{\mathbb{S}}
\newcommand{\boundOP}{\mathcal{B}}
\newcommand{\closOP}{\mathcal{K}}
\newcommand{\VEC}{\operatorname{Vec}}
\newcommand{\vx}{{{x}}}
\newcommand{\Q}{\mathcal{Q}}
\renewcommand{\Re}{\mathrm{Re}}
\newcommand{\uI}{j}
\crefname{enumi}{}{}
\crefname{enumii}{}{}
\title[ Fractional powers of vector operators and fractional Fourier's law]
{Fractional powers of vector operators and fractional Fourier's law in a Hilbert space}
\author[F. Colombo]{Fabrizio Colombo}
\address{(FC)
Politecnico di Milano\\Dipartimento di Matematica\\Via E. Bonardi, 9\\20133
Milano, Italy}
\email{fabrizio.colombo@polimi.it}
\author[J. Gantner]{Jonathan Gantner}
\address{(JG)
Politecnico di Milano\\Dipartimento di Matematica\\Via E. Bonardi, 9\\20133
Milano, Italy (On leave)
} \email{gantner.jonathan@gmail.com}
\begin{document}

\begin{abstract}
In this paper we give a concrete application of the spectral theory based on the notion of $S$-spectrum to fractional diffusion process.
Precisely, we consider the Fourier law for the propagation of the heat in non homogeneous materials,
that is the heat flow is given by the vector operator:
$$
T=e_1\,a(x)\partial_{x_1} +  e_2\,b(x)\partial_{x_2} +  e_3\,c(x)\partial_{x_3}
$$
where $e_\ell$, $\ell=1,2,3$ are orthogonal unit vectors in $\mathbb{R}^3$,
 $a$, $b$, $c$ are given real valued functions that depend on the space variables $x=(x_1,x_2,x_3)$, and possibly also on time.
 Using the $H^\infty$-version of the $S$-functional calculus we have recently defined fractional powers of quaternionic operators, which contain, as a
particular case, the vector operator $T$. Hence, we can define the non-local version $T^\alpha$, for $\alpha\in (0,1)$, of the Fourier law defined by $T$. We will see in this paper how we have to interpret $T^\alpha$,
when we introduce our new approach called: ``The $S$-spectrum approach to fractional diffusion processes''.
This new method allows us to enlarge the class of fractional diffusion and fractional evolution problems that can be defined and studied using our spectral theory based on the $S$-spectrum for vector operators.
 This paper is devoted to researchers working in fractional diffusion and fractional evolution problems, partial differential equations and non commutative operator theory.
  Our theory applies not only to the heat diffusion process but also to Fick's law and more in general it allows to compute the fractional powers of vector operators that arise in different fields of science and technology.
\end{abstract}
\maketitle
\vskip 1cm
\par\noindent
 AMS Classification: 47A10, 47A60.
\par\noindent
\noindent {\em Key words}:  fractional  powers of vector operators,
S-spectrum, the $S$-spectrum approach,  fractional diffusion processes, fractional evolution processes.
\vskip 1cm

\section{Introduction}

In a series of papers the authors and their collaborators have developed a new spectral theory based on the notions of $S$-spectrum that
applies to quaternionic operators and to $n$-tuples of not necessarily commuting operators. Such theory in
particular applies to vector operators  such as
\begin{equation}\label{FOUT}
T=e_1\,a(x)\partial_{x_1} +  e_2\,b(x)\partial_{x_2} +  e_3\,c(x)\partial_{x_3}
\end{equation}
that can represent Fourier's laws for non homogeneous materials in a bounded or unbounded domain.
Here $e_\ell$, for $\ell=1,2,3$, are orthogonal unit vectors,
 $a$, $b$, $c$ are given real valued functions that depend on the space variables $x=(x_1,x_2,x_3)$ and can depend on a parameter $t$ that represents the time in a given evolution process.
The development of the quaternionic spectral theory based on the $S$-spectrum started in 2016 with the discovery of the $S$-spectrum,
see the historical note in the book \cite{CGKBOOK}.
With the development of  this new branch of operator theory it was possible to extend  several classical results of complex operator theory
to quaternionic linear operators and in particular to linear vector operators.
An important application of the classical Riesz-Dunford functional calculus and of its extensions to unbounded operators,
like McIntosh's $H^\infty$-functional calculus, is the definition of fractional powers of sectorial operators.

This can also be done for quaternionic sectorial  operators (and in particular for sectorial vector operators) using the quaternionic version of the
$H^\infty$-functional calculus.
The study of the fractional powers of quaternionic and of vector operators started with the paper \cite{FJTAMS}.
The  $H^\infty$-functional calculus
 has been extended to the quaternionic setting in \cite{Hinfty},
following the original paper of McIntosh  \cite{McI1}.
This calculus can be developed  with less  restrictions on the operator as in \cite{Haase}. Recently, in  \cite{64FRAC},
 we have extended the  $H^\infty$-functional calculus in its full generality following this approach. In the same paper,  we also proved some non trivial results like the spectral mapping theorem in this setting and we used this functional calculus to define and study fractional powers of quaternionic linear sectorial operators and in particular vector operators.
 In our papers we have extended several classical results of fractional powers of operators in \cite{Balakrishnan,Guzman1,Kato,Komatsu1,McI1,Watanabe,Yosida} to the quaternionic setting.
\\
\\
This paper has two main goals. The first one is to define a new approach to fractional diffusion processes
 that is based on the spectral theory on the $S$-spectrum,
that we will call:
{\em  The $S$-spectrum approach to fractional diffusion problems}.
 The second is to give a concrete application to fractional diffusion problems and fractional evolution
when the operator $T$ defined in (\ref{FOUT}) has commuting components.
Our theory  applies more in general to operators $T$ with non commuting components ($a(x)\partial_{x_1}$, $b(x)\partial_{x_2}$, $c(x)\partial_{x_3}$)
but, for the sake of simplicity, we consider the commutative case. The strategy
is the same in this case, but the computations are a bit reduced, so that we can better explain how this theory works.
We will solve the following problem under suitable conditions.
\begin{problem}\label{PROB}
Let $\Omega \subseteq\mathbb{R}^3$ be a bounded or an unbounded domain.
Suppose that the heat flux $\mathbf{q}(x)$ in the non homogeneous material contained in $\Omega$ is given by the local operator (\ref{FOUT}).
Determine the non local law associated with $T$ by defining the fractional powers of $T$.
\end{problem}
\begin{remark}{\rm
We point out that the solution of Problem \ref{PROB} has several advantages as we will mention in the sequel. From the physical point of view, the most important one is that it allows to determine the fractional heat equation for non homogeneous materials by modifying only Fourier's law, but
 without modifying of the principle of conservation of energy.
}
\end{remark}
The motivation for solving the above problem is that the classical heat equation has solutions with unphysical properties. In particular heat can propagate with infinite speed. For this reason in the last years some non-local variations of the model
 have been introduced by replacing the Laplace operator $\Delta$ by the fractional powers $\Delta^\alpha$, for $\alpha\in(0,1)$.
  The operator $\Delta^\alpha$  is non-local and it takes non local phenomena in the heat propagation into account, see
for example \cite{BocurValdinoci,CafSil,CSV,CV2,GMPunzo,Vazquez} and the literature therein.
Before we state our main results  we need to recall some definitions.
\\
\\
The fundamental concepts of our theory are the $S$-resolvent set, the $S$-spectrum and the $S$-resolvent operators.
We denote the set of quaternions by $\mathbb{H}$ and consider a two-sided quaternionic Banach space $V$. We denote the set of closed quaternionic right linear operators on $V$ by  $\closOP(V)$ and we denote the Banach space of all bounded right linear operators on $V$ by $\mathcal{B}(V)$, which is endowed with the natural operator norm. For $T\in\closOP(V)$, we define the operator associated with the $S$-spectrum as:
\[ \Q_{s}(T) := T^2 - 2\Re(s)T + |s|^2\id, \qquad \text{for $s\in\hh$} \]
where $\Q_{s}(T):\mathcal{D}(T^2)\to V$,
and $\Re(s)$ denotes the real part and $|s|$ the modulus of the quaternion~$s$.
 We define the $S$-resolvent set of  $T$ as
\[\rho_S(T):= \{ s\in\hh: \Q_{s}(T)^{-1}\in\boundOP(V)\}\]
and the $S$-spectrum of $T$ as
\[\sigma_S(T):=\hh\setminus\rho_S(T).\]
 For $s\in\rho_S(T)$, the left $S$-resolvent operator is defined as
\begin{equation}
S_L^{-1}(s,T):= \Q_s(T)^{-1}\overline{s} -T\Q_s(T)^{-1}
\end{equation}
and the right $S$-resolvent operator is defined as
\begin{equation}
S_R^{-1}(s,T):=-(T-\id \overline{s})\Q_s(T)^{-1}.
\end{equation}

We will apply a modified version of  the quaternionic Balakrishnan formula
to the operator
\begin{equation}\label{TCOM}
T=e_1a_1(x_1)\partial_{x_1} + e_2a_2(x_2)\partial_{x_2} + e_3a_3(x_3)\partial_{x_3}.
\end{equation}
In order to determine fractional powers of $T$ we assume suitable conditions on the coefficients of the operator $T$ that allow us to show that the  $S$-resolvent operators $S_L^{-1}(s,T)$ and $S_R^{-1}(s,T)$ satisfy suitable estimates.
We choose now any $j\in \mathbb{H}$ such that  $j^2=-1$ and  for $\alpha\in(0,1)$ and  $v\in\dom(T)$ we set
\begin{equation}\label{BALA1}
P_{\alpha}(T)v := \frac{1}{2\pi} \int_{-j\rr}   S_L^{-1}(s,T)\,ds_j\, s^{\alpha-1} T v
\end{equation}
or
\begin{equation}\label{BALA2}
P_{\alpha}(T)v := \frac{1}{2\pi} \int_{-j\rr} s^{\alpha-1} \,ds_j\,  S_R^{-1}(s,T) T v
\end{equation}
where $ds_j=ds/j$. These integrals do not depend on the imaginary unit $j$ and both integrals define the same operator. Moreover, they correspond to a modified version of Balakrishnan's formula that takes only spectral points with positive real part into account.  These modifications are necessary because $s\mapsto s^\alpha$, for $\alpha\in (0,1)$ is not defined for $s\in(-\infty,0)$ and, unlike in the complex setting, it is not possible to choose different branches of $s^{\alpha}$ in order to avoid this problem. When we define the fractional powers of $T$ we have to take  this fact into account and choose a suitable path of Integration in Balakrishnan's formula.  We furthermore use the notation  $P_{\alpha}(T)$, for  $\alpha\in (0,1)$,
to stress that  we only take  spectral values $s$  with  $Re(s)\geq 0$ into account,
i.e. only points where $s^{\alpha}$ is actually defined. We will explain this in the sequel with more details in order to justify the formulas (\ref{BALA1}) or equivalently in (\ref{BALA2}).

 The main results are Theorems \ref{Thm1} and \ref{MDUE}  that we summarize in the following.
\begin{theorem} Let $\Omega$ be a bounded domain in $\mathbb{R}^3$ with sufficiently smooth boundary.
Let $T$ be the operator defined in (\ref{TCOM})
 and assume that the coefficients  $a_{\ell}:\overline{\Omega} \to \mathbb{R}$, for $\ell = 1,2,3$ belong to
$ \mathcal{C}^1(\overline{\Omega},\rr)$  and $a_{\ell}(x_\ell)\geq m$ in $\overline{\Omega}$ for some $m>0$.
Moreover, assume that
\[
\inf_{\vx\in\Omega} \left| a_{\ell}(x_{\ell})^2\right| - \frac{\sqrt{C_{\Omega}}}{2}\left\| \partial_{x_{\ell}}a_{\ell}(x_{\ell})^2\right\|_{\infty} > 0, \qquad \ell = 1,2,3,
\]
and
\[
\frac{1}{2} - \frac{1}{2}\|\Phi\|_{\infty}^2C_{\Omega}^2 C_a^2 >0
\]
where $C_\Omega$ is the Poincar\'{e} constant of $\Omega$ and
\[
\Phi(\vx) :=  \sum_{\ell = 1}^{3}e_{\ell} \partial_{x_{\ell}} a_{\ell}(x_{\ell})\quad\text{and}\quad C_a := \sup_{\substack{\vx\in\Omega \\ \ell = 1,2,3}} \frac{1}{| a_{\ell}(x_{\ell})|} =  \frac{1}{\inf_{\substack{\vx\in\Omega \\ \ell = 1,2,3}} | a_{\ell}(x_{\ell})|}.
\]
Then any $s\in\hh\setminus\{0\}$ with $\Re(s) = 0$ belongs to $\rho_S(T)$ and the $S$-resolvents satisfy the estimate
\begin{equation}\label{SREST}
\left\|S_L^{-1}(s,T)\right\| \leq \frac{\Theta}{|s|}\qquad\text{and}\qquad \left\|S_R^{-1}(s,T)\right\| \leq \frac{\Theta}{|s|},\qquad \text{if}\quad\Re(s) =0,
\end{equation}
with a constant $\Theta >0$ that does not depend on $s$.
Moreover, for $\alpha\in(0,1)$, and for any  $v\in\dom(T)$, the integral
\[
P_{\alpha}(T)v := \frac{1}{2\pi}\int_{-\uI\rr} s^{\alpha-1}\,ds_{\uI}\,S_{R}^{-1}(s,T) Tv.
\]
converges absolutely in $L^2(\Omega,\hh)$.
\end{theorem}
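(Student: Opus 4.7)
The plan is to establish an $L^{2}$-coercivity inequality for the quadratic form of $\Q_{s}(T)$ on the imaginary axis, read off the bounds on $\Q_{s}(T)^{-1}$ and $T\Q_{s}(T)^{-1}$ from it, and then split the integral defining $P_{\alpha}(T)v$ at $|s|=1$, using an algebraic rewriting to tame the behaviour near zero.

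First, writing $s=\uI\eta$ with $\eta\in\rr\setminus\{0\}$, one has $\Q_{s}(T)=T^{2}+\eta^{2}\id$. Using the anti-commutativity $e_{i}e_{j}+e_{j}e_{i}=0$ for $i\neq j$ together with $\partial_{x_{i}}a_{j}=0$ for $i\neq j$, the off-diagonal terms in the expansion of $T\cdot T$ cancel and
\[
T^{2}=-\sum_{\ell=1}^{3} a_{\ell}(x_{\ell})\,\partial_{x_{\ell}}\bigl(a_{\ell}(x_{\ell})\,\partial_{x_{\ell}}\bigr)
\]
is a real scalar elliptic expression. Taking $\dom(T)$ to be the natural Sobolev space with zero trace on $\partial\Omega$, decomposing $u=\sum_{p=0}^{3}u_{p}e_{p}$ into real components (with $e_{0}=1$), and integrating by parts in $L^{2}(\Omega,\hh)$ gives
\[
\Re\langle\Q_{s}(T)u,u\rangle=\sum_{\ell,p}\int_{\Omega}a_{\ell}^{2}(\partial_{x_{\ell}}u_{p})^{2}\,dx+\sum_{\ell,p}\int_{\Omega}\tfrac{1}{2}(\partial_{x_{\ell}}a_{\ell}^{2})(\partial_{x_{\ell}}u_{p})\,u_{p}\,dx+|s|^{2}\|u\|_{2}^{2}.
\]
The middle terms are controlled by Cauchy--Schwarz and a directional Poincar\'e inequality of the form $\|u_{p}\|_{2}\leq\sqrt{C_{\Omega}}\,\|\partial_{x_{\ell}}u_{p}\|_{2}$, producing a lower-order cost $\tfrac{\sqrt{C_{\Omega}}}{2}\|\partial_{x_{\ell}}a_{\ell}^{2}\|_{\infty}\|\partial_{x_{\ell}}u_{p}\|_{2}^{2}$ that hypothesis (1) is precisely designed to absorb into the leading $\inf a_{\ell}^{2}$ term. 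Rewriting via the adjoint relation $T^{*}=T+\Phi$ (valid in $L^{2}(\Omega,\hh)$ by quaternionic integration by parts), Young's inequality, and the Poincar\'e-type bound $\|u\|_{2}\leq C_{\Omega}C_{a}\|Tu\|_{2}$ (using $a_{\ell}\geq 1/C_{a}$), hypothesis (2) then upgrades the estimate to
\[
\Re\langle\Q_{s}(T)u,u\rangle\geq\alpha_{0}\bigl(\|Tu\|_{2}^{2}+|s|^{2}\|u\|_{2}^{2}\bigr),\qquad\alpha_{0}=\tfrac{1}{2}-\tfrac{1}{2}\|\Phi\|_{\infty}^{2}C_{\Omega}^{2}C_{a}^{2}>0.
\]

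Cauchy--Schwarz applied to this coercivity yields $\|\Q_{s}(T)^{-1}\|\leq C|s|^{-2}$ and $\|T\Q_{s}(T)^{-1}\|\leq C|s|^{-1}$, so the defining formulas $S_{L}^{-1}(s,T)=\Q_{s}(T)^{-1}\bar s-T\Q_{s}(T)^{-1}$ and $S_{R}^{-1}(s,T)=-T\Q_{s}(T)^{-1}+\bar s\,\Q_{s}(T)^{-1}$ produce $\|S_{L}^{-1}(s,T)\|,\|S_{R}^{-1}(s,T)\|\leq\Theta/|s|$ with $\Theta$ independent of $s$. For the convergence of $P_{\alpha}(T)v$, I parametrize $s=-\uI\eta$ and split at $|\eta|=1$. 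On $\{|\eta|\geq 1\}$ the $S$-resolvent bound gives $\|s^{\alpha-1}S_{R}^{-1}(s,T)Tv\|_{2}\leq\Theta|\eta|^{\alpha-2}\|Tv\|_{2}$, integrable since $\alpha-2<-1$. On $\{|\eta|<1\}$ the direct bound only yields a non-integrable $|\eta|^{\alpha-2}$, so I exploit that $T$ commutes with $\Q_{s}(T)^{-1}$ (since $\Q_{s}(T)$ is a polynomial in $T$) and the identity $T^{2}=\Q_{s}(T)-|s|^{2}$, valid because $\Re(s)=0$, to obtain the rewriting
\[
S_{R}^{-1}(s,T)\,Tv=-v+|s|^{2}\,\Q_{s}(T)^{-1}v+\bar s\,\Q_{s}(T)^{-1}Tv.
\]
Combined with the bounds on $\Q_{s}(T)^{-1}$ and $T\Q_{s}(T)^{-1}$, this delivers the \emph{uniform} estimate $\|S_{R}^{-1}(s,T)Tv\|_{2}\leq C_{1}\|v\|_{2}+C_{2}\|Tv\|_{2}$, so the integrand is dominated by $|\eta|^{\alpha-1}(C_{1}\|v\|_{2}+C_{2}\|Tv\|_{2})$, integrable on $\{|\eta|<1\}$ since $\alpha-1>-1$; the two pieces together give absolute convergence in $L^{2}(\Omega,\hh)$.

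The main obstacle is the coercivity estimate: handling the quaternionic integration by parts carefully so that both hypotheses simultaneously feed into the single inequality $\alpha_{0}(\|Tu\|_{2}^{2}+|s|^{2}\|u\|_{2}^{2})$ requires tracking the interaction between the non-commuting units $e_{\ell}$ and the multiplication operator $\Phi$ through the adjoint relation $T^{*}=T+\Phi$. The algebraic identity used to cancel the $|s|^{-1}$ singularity near zero is conceptually clean, but rigorously holds initially on $\dom(T^{2})$ and must be extended to $\dom(T)$ by a density and closure argument.
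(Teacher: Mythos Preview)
Your outline for Part~2 (absolute convergence of the integral) is essentially the paper's argument: split at $|s|=1$, use the $\Theta/|s|$ bound for large $|s|$, and rewrite the integrand near $0$ to obtain a uniform bound. The paper uses the slightly cleaner identity $S_R^{-1}(s,T)Tv = sS_R^{-1}(s,T)v - v$ (the right $S$-resolvent equation) in place of your expansion via $T^2 = \Q_s(T)-|s|^2$, which avoids the domain extension you flag at the end, but the two routes are equivalent.

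There is, however, a genuine gap in Part~1. Your key step is the Poincar\'e-type inequality $\|u\|_2 \leq C_{\Omega}C_a\|Tu\|_2$ for quaternion-valued $u\in H_0^1(\Omega,\hh)$, which you present as an immediate consequence of $a_\ell\geq 1/C_a$. This would follow from $\|Tu\|_2^2 = \sum_{\ell}\|a_\ell\partial_{x_\ell}u\|_2^2$, but that identity is \emph{false} for quaternion-valued $u$: writing $u=\sum_p u_p e_p$ with real $u_p$, the expansion of $|Tu(x)|^2$ contains cross terms of the form $e_k e_\ell$ with $k\neq\ell$ that do not cancel pointwise, and for non-constant coefficients $a_\ell$ they do not cancel after integration by parts either. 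Consequently your claimed coercivity $\Re\langle\Q_s(T)u,u\rangle\geq\alpha_0(\|Tu\|_2^2+|s|^2\|u\|_2^2)$ with $\alpha_0=\tfrac12-\tfrac12\|\Phi\|_\infty^2 C_\Omega^2 C_a^2$ is not established, and without the precise constant $C_\Omega C_a$ the second hypothesis would no longer match.

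The paper circumvents exactly this obstruction by exploiting a structural fact you do not use: for $\Re(s)=0$ the operator $\Q_s(T)=T^2+|s|^2\id$ has \emph{real} (scalar) coefficients, so it acts componentwise on $u=\sum_p u_p e_p$. For real-valued $u_p$ one genuinely has $|Tu_p|^2=\sum_k(a_k\partial_{x_k}u_p)^2$, hence $\|u_p\|_2\leq C_\Omega C_a\|Tu_p\|_2$. The paper first obtains an inequality of the form
\[
\tfrac12\|Tu_w\|_2^2+|s|^2\|u_w\|_2^2-\tfrac12\|\Phi\|_\infty^2 C_\Omega^2 C_a^2\sum_{p=0}^3\|Tu_{w,p}\|_2^2\leq\|w\|_2\|u_w\|_2,
\]
then applies the same estimate to each real component (legitimate precisely because $\Q_s(T)$ is scalar) to bound $\|Tu_{w,p}\|_2^2\leq K^{-1}\|w\|_2\|u_w\|_2$, and only afterwards recombines to get $\|T\Q_s(T)^{-1}\|\leq C/|s|$. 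This componentwise detour is the missing idea in your proposal.
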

Finally we observe that
the operator $P_{\alpha}(T)$ can equivalently be represented using the left $S$-resolvent operator.
\\
\\
For researchers that are interested in quaternionic operator theory we mention that
the most important results in quaternionic operators theory and the associated theory of slice hyperholomorphic function are contained in the books \cite{ACSBOOK,CGKBOOK,ACSBOOK2,MR2752913,GSSb}.
The original papers where the general version of the $S$-functional calculus has been developed are
 \cite{acgs,JGA,CLOSED,Where,CSSJFA,DA}.
From an historical point of view, the research for an appropriate notion of quaternionic spectrum, the $S$-spectrum,
started because in quaternionic quantum mechanics, see  \cite{adler,BvN}, the notion of quaternionic spectrum was unclear.
 Recent investigations in quaternionic quantum mechanics based on the current knowledge in quaternionic operator theory  can be found for example in \cite{JONAQS,IRE1,IRE2}.
The basic tool for quaternionic quantum mechanics is the spectral theorem based on the $S$-spectrum  that
 been developed in \cite{ack,acks2}, and in \cite{MatSpec} for the case of matrices. Beyond the spectral theorem there several directions further directions of research. The case of  spectral operators has for instance been studied in \cite{SpecOP} and the theory of groups and semigroups of quaternionic linear operators has been studied in
\cite{FUCGEN,perturbation,evolution,GR}.
\\
\\
The plan of the paper is as follows: 
Section 1 contains the introduction. In Section 2  we introduce the basic concepts of the spectral theory on the $S$-spectrum and
Section 3 contains a new strategy to fractional diffusion processes. In particular we define in a subsection: "The $S$-spectrum approach to fractional diffusion processes". Finally in Section 4 we give a concrete  application of the $S$-spectrum approach to a Fractional Fourier's law in a Hilbert space.

\section{Basic concepts of the spectral theory on the $S$-spectrum}

In this section we recall the basic definitions of the quaternions and of the spectral theory for quaternionic operators.
The skew-field of quaternions consists of the real vector space
$
\hh:=\left\{p = \xi_0 + \sum_{\ell=1}^3\xi_{\ell}e_{\ell}: \xi_{\ell}\in\rr\right\},
$
 which is endowed with an associative product with unity $1$ such that
$e_{\ell}^2 = - 1$ and $e_\ell e_m = -e_me_\ell $ for $\ell,m\in\{1,2,3\}$ with $\ell\neq m$.
The real part of a quaternion $p = \xi_0 + \sum_{i=1}^3\xi_ie_i$ is defined as $\Re(p) := \xi_0$, its imaginary part as $\underline{p} := \sum_{i=1}^3\xi_i e_i$ and its conjugate as $\overline{p} := \Re(p) - \underline{p}$.
Each element $j$ of the set
\[\SS := \{ p\in\hh: \Re(p) = 0, |p| = 1 \}\]
 satisfies $j^2 = -1$ and is therefore called an imaginary unit. For any $j\in\SS$,
 the subspace $\cc_j := \{p_0 + j p_1: p_0,p_1\in\rr\}$ is an isomorphic copy of the field of complex numbers.   Any quaternion $p$ belongs to such a complex plane $\mathbb{C}_j$: if we set
 \[j_p := \begin{cases}\frac{1}{|\underline{p}|}\underline{p},& \text{if  }\underline{p} \neq 0 \\ \text{any }j\in\SS, \quad&\text{if }\underline{p}  = 0,\end{cases}\]
 then $p = p_0 + j_p p_1$ with $p_0 =\Re(p)$ and $p_1 = |\underline{p}|$. The set
 \[
 [p] := \{p_0 + ip_1: i\in\SS\},
 \]
is a 2-sphere, that reduces to a single point if $p$ is real. We consider the field of real numbers as a subset of $\hh$ and identify it with the set of real quaternions $\rr\cong \{p\in\hh: \underline{p} = 0\}$. We also consider the three-dimensional Euclidean space as a subset of $\hh$ and identify it with the set of purely imaginary quaternions, that is $\rr^3\cong \{q\in\hh: \Re(q) = 0\}$.

The theory of complex linear operators is based on the theory of holomorphic functions.
In a similar way, the theory of quaternionic linear operators is based on the theory of slice hyperholomorphic functions. We recall just some basic facts in order to introduce the tools that will be used in the sequel. The
proofs of the results  can be found in the books  mentioned in the introduction.
\begin{definition}[Slice hyperholomorphic functions]\label{sHolDef}
Let $U\subset\hh$ be open and axially symmetric, that is $[p]\subset U$ for any $p\in U$. A function $f: U \to \hh$ is called left slice hyperholomorphic, if it is of the form
\begin{equation}
\label{lHolDef}f(p) = \alpha(p_0,p_1) + j_p \beta(p_0,p_1) \qquad \forall p\in U,
\end{equation}
where $\alpha$ and $\beta$ are functions that take values in $\hh$, satisfy the compatibility condition
\begin{equation}\label{CCond}
\alpha(p_0,p_1) = \alpha(p_0,-p_1)\quad\text{and}\quad \beta(p_0,p_1) = -\beta(p_0,-p_1)
\end{equation}
and the Cauchy-Riemann-differential equations
\begin{equation}\label{CR}
\frac{\partial}{\partial p_0} \alpha(p_0,p_1) = \frac{\partial}{\partial p_1} \beta(p_0,p_1)\quad\text{and}\quad \frac{\partial}{\partial p_1}\alpha(p_0,p_1) = - \frac{\partial}{\partial p_0} \beta(p_0,p_1).
\end{equation}
A function $f: U \to \hh$ is called right slice hyperholomorphic, if it is of the form
\begin{equation}\label{rHolDef}
f(p) = \alpha(p_0,p_1) +\beta(p_0,p_1)  j_p \quad \forall p\in U,
\end{equation}
with functions $\alpha$ and $\beta$ satisfying \eqref{CCond} and \eqref{CR}. Finally, a left slice hyperholomorphic function $f = \alpha + j \beta$ is called intrinsic, if $\alpha$ and $\beta$ are real-valued.
\end{definition}
Slice hyperholomorphic functions satisfy Cauchy formulas with left resp. right slice hyperholomorphic kernels.
\begin{definition}
For $s,p\in\hh$ with $s\notin [p]$, we define 
\[
\Q_{s}(q) := (q^2 - 2\Re(s) q + |s|^2).
\]
The left and right slice hyperholomorphic Cauchys kernel are defined as
\[
S_L^{-1}(s,p) := - \Q_{s}(p)^{-1}(p-\overline{s})\quad\text{and}\quad S_{R}^{-1}(s,p) := -(p-\overline{s})\Q_{s}(p)^{-1}.
\]
\end{definition}
\begin{theorem}
Let $U$ be a bounded slice Cauchy domain, that is $U$ is an bounded, open and axially symmetric set such that $\partial (U\cap \cc_j)$ consists of a finite number of Jordan curves for some (and hence any) $j\in\SS$. If $f$ is left slice hyperholomorphic on $\clos{U}$, then  
\[
f(p) = \frac{1}{2\pi}\int_{\partial (U\cap\cc_j)} S_L^{-1}(s,p)\,ds_j\,f(s)\qquad \text{for any }p\in U,
\]
where $j\in\SS$ can be chosen arbitrarily and $ds_j := ds(-j)$. 
 If $f$ is right slice hyperholomorphic on $\clos{U}$, then  
\[
f(p) = \frac{1}{2\pi}\int_{\partial (U\cap\cc_j)} f(s) \,ds_j\,S_L^{-1}(s,p)\qquad \text{for any }p\in U,
\]
where $j\in\SS$ can be chosen arbitrarily and $ds_j := ds(-j)$. 
\end{theorem}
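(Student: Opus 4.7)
The plan is to reduce the statement to the classical complex Cauchy integral formula applied on the slice $U_j := U \cap \cc_j$, and then to extend the resulting identity from $U_j$ to all of $U$ by exploiting the slice hyperholomorphic structure of both sides.

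First I would fix an auxiliary imaginary unit $i \in \SS$ anticommuting with $j$, so that $\hh = \cc_j \oplus \cc_j\, i$ as a right $\cc_j$-module. The restriction $f_j := f|_{U_j}$ then splits as $f_j(z) = F_1(z) + F_2(z)\, i$ with $F_1, F_2 : U_j \to \cc_j$, and the Cauchy--Riemann conditions \eqref{CR} on the slice $\cc_j$ (where $j_p = j$) translate into the classical Cauchy--Riemann equations for $F_1$ and $F_2$ in the real variables $(p_0, p_1)$. Hence $F_1$ and $F_2$ are holomorphic on $U_j$ in the ordinary complex-analytic sense, and since $\partial U_j$ is a finite union of Jordan curves the classical Cauchy formula applies to each component. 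Combining the two component identities, using that $(s-p)^{-1}$, $ds$, $F_k(s)$ and $1/(2\pi j)$ all lie in $\cc_j$ and therefore commute with one another, together with the direct computation
\[ S_L^{-1}(s,p) = -\Q_s(p)^{-1}(p - \overline{s}) = -\bigl((p-s)(p-\overline{s})\bigr)^{-1}(p-\overline{s}) = (s-p)^{-1} \]
valid for $s,p \in \cc_j$ with $s \neq p$, and $\frac{1}{2\pi j}\, ds = \frac{1}{2\pi}\, ds_j$, yields the asserted identity for every $p \in U_j$.

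The second step is to extend the formula from $U_j$ to all of $U$. The left-hand side is left slice hyperholomorphic on $U$ by hypothesis. A direct inspection of the expression $-\Q_s(p)^{-1}(p - \overline{s})$ shows that for fixed $s \notin [p]$, the map $p \mapsto S_L^{-1}(s,p)$ admits a representation of the form $\alpha(p_0, p_1) + j_p\, \beta(p_0, p_1)$ with $\alpha,\beta$ satisfying \eqref{CCond}--\eqref{CR} on $\hh \setminus [s]$, hence is left slice hyperholomorphic there. Differentiation under the integral sign is justified by uniform continuity of the kernel on the compact contour $\partial U_j$ for $p$ in a suitable neighbourhood in $U$, so the right-hand side defines a left slice hyperholomorphic function of $p$ on $U$. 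The two slice hyperholomorphic functions coincide on $U_j$ by the previous step, and the identity principle for slice hyperholomorphic functions then forces equality throughout the connected axially symmetric set $U$. The independence of the formula from the choice of $j$ is automatic, since the left-hand side contains no reference to $j$.

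The right slice hyperholomorphic case is handled symmetrically: one writes $f_j = F_1 + i\, F_2$ using the opposite splitting, applies the classical Cauchy formula with the kernel placed on the appropriate side of $F_k(s)$, and uses that $(s-p)^{-1} = S_R^{-1}(s,p)$ for $s,p \in \cc_j$. The main subtlety of the entire argument lies in the passage from the slice to the full domain: one must know that the slice Cauchy kernel is globally left (respectively right) slice hyperholomorphic in its second argument, not merely slicewise, and that the identity principle is available on the axially symmetric Cauchy domain $U$. Both are structural facts of the slice hyperholomorphic theory; once they are in hand the rest of the proof is essentially bookkeeping on top of the classical complex Cauchy formula.
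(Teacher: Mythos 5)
The paper itself gives no proof of this theorem --- it is quoted as background, with the proofs deferred to the cited monographs --- so there is no in-paper argument to compare against; your proof is the standard one from that literature and is essentially correct. The skeleton is right: the splitting lemma on the slice $U\cap\cc_j$ reduces the claim to the classical Cauchy integral formula for the two $\cc_j$-valued holomorphic components, the computation $S_L^{-1}(s,p)=(s-p)^{-1}$ for commuting $s,p$ identifies the classical kernel with the slice kernel on the slice, and the identity is then propagated off the slice using the slice hyperholomorphy of $p\mapsto S_L^{-1}(s,p)$.

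Two points deserve correction. First, in the extension step you appeal to ``the identity principle for slice hyperholomorphic functions'' on ``the connected axially symmetric set $U$''. A slice Cauchy domain need not be connected, and the identity principle in its usual form is stated for slice domains, i.e.\ sets meeting the real axis, which $U$ need not do. The right tool here is the representation formula for slice functions on axially symmetric open sets: since you already have equality of the two sides on the \emph{entire} slice $U\cap\cc_j$, the representation formula reconstructs both sides on every sphere $[p]$, $p\in U$, from their values at the two points $[p]\cap\cc_j$, and equality on all of $U$ follows with no connectedness hypothesis. Second, the right slice hyperholomorphic formula as printed in the statement carries the kernel $S_L^{-1}(s,p)$; the correct kernel there is $S_R^{-1}(s,p)$. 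The two coincide for $s,p$ in a common complex plane, which is why the slice computation cannot detect the difference, but they differ globally, and only the version with $S_R^{-1}(s,p)$ is left slice hyperholomorphic in $s$ and hence extends correctly off the slice. Your proof, which uses $S_R^{-1}$, in fact establishes the corrected statement; the printed one contains a typo.
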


The slice-hyperholomorphic Cauchy formulas are the starting point for the definition of the the $S$-functional calculus, which is the natural extension of the Riesz-Dunford-functional calculus for complex linear operators  to quaternionic linear operators. It is based on the theory of slice hyperholomorphic functions and follows the idea of the classical case: we formally replace the scalar variable $p$ in the slice hyperholomorphic Cauchy formula by an operator $T$. The proofs of the results stated in this subsection can be found in \cite{acgs, MR2752913, DA}. We recall the main definitions related to the $S$-functional calculus.
The $H^\infty$-functional calculus, which is able to define unbounded functions of operators, is too complicated to be summarized here and we refer the interested reader to to the original papers, see \cite {Hinfty,64FRAC}.
Here we just use  the quaternionic  Balakrishnan's formula that is a consequence of the $H^\infty$ functional calculus developed in \cite{64FRAC}.

Let $V$ denote a two-sided quaternionic Banach space. We denote the set of all bounded quaternionic right-linear operators on $V$ by $\boundOP(V)$ and the set of all closed and quaternionic right-linear operators on $V$ by $\closOP(V)$.
One of the major differences between the theory of complex linear operators and the theory of quaternionic linear operators, and particular of vector operators, is that the spectrum for quaternionic operators is defined by a second order operator. Precisely,
for $T\in\closOP(V)$, we define the operator
\[
\Q_{s}(T) := T^2 - 2\Re(s)T + |s|^2\id: \ \mathcal{D}(T^2)\subset V\to V, \qquad \text{for $s\in\hh$}.
\]
\begin{definition}
Let $T\in\closOP(V)$. We define the $S$-resolvent set of  $T$ as
\[\rho_S(T):= \{ s\in\hh: \Q_{s}(T)^{-1}\in\boundOP(V)\}\]
and the $S$-spectrum of $T$ as
\[\sigma_S(T):=\hh\setminus\rho_S(T).\]
If $s\in\rho_S(T)$, then $\Q_{s}(T)^{-1}$ is called the pseudo-resolvent of $T$ at $s$. Furthermore, we define the extended $S$-spectrum $\sigma_{SX}(T)$ as
\begin{equation*}
\sigma_{SX}(T) := \begin{cases} \sigma_{S}(T) & \text{if  $T$ is bounded,}\\
\sigma_{S}(T)\cup\{\infty\} & \text{if  $T$ is unbounded.}
\end{cases}
\end{equation*}
\end{definition}

The $S$-spectrum has properties that are similar to those of the spectrum of a complex linear operator.
\begin{theorem}\label{SpecProp}
Let $T\in\closOP(V)$.
\begin{enumerate}[label = (\roman*)]
\item The $S$-spectrum $\sigma_{S}(T)$ of $T$ is axially symmetric.
\item The $S$-spectrum $\sigma_{S}(T)$ is a closed subset of $\hh$ and the extended $S$-spectrum $\sigma_{SX}(T)$ is a closed and hence compact subset of $\hh_{\infty}:= \hh \cup\{\infty\}$.
\item If $T$ is bounded, then $\sigma_{S}(T)$ is nonempty and bounded by the norm of $T$, i.e. $\sigma_{S}(T)\subset \overline{B_{\|T\|}(0)}$, where $B_{\|T\|}(0)$ is the open ball of radius $\|T\|$ centered at $0$.
\end{enumerate}
\end{theorem}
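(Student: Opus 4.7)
The plan is to treat the three parts in turn, relying on the structural identity that $\Q_s(T)$ depends on $s$ only through the two real quantities $\Re(s)$ and $|s|^2$.

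For \textbf{(i)}, I would simply observe that if $s'\in[s]$, then $\Re(s')=\Re(s)$ and $|s'|=|s|$, so $\Q_{s'}(T)=\Q_s(T)$ as unbounded operators. Hence $s\in\rho_S(T)$ if and only if $[s]\subset\rho_S(T)$, which is exactly axial symmetry of $\sigma_S(T)$.

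For \textbf{(ii)}, I would prove that $\rho_S(T)$ is open by a Neumann-series perturbation argument. Fix $s_0\in\rho_S(T)$ and write
\[
\Q_s(T) = \Q_{s_0}(T) + R(s,s_0), \qquad R(s,s_0):=-2\bigl(\Re(s)-\Re(s_0)\bigr)T + \bigl(|s|^2-|s_0|^2\bigr)\id,
\]
which factors as $\Q_s(T)=[\id + R(s,s_0)\Q_{s_0}(T)^{-1}]\Q_{s_0}(T)$. The term $T\Q_{s_0}(T)^{-1}$ is bounded because it equals $\Q_{s_0}(T)^{-1}\overline{s_0}-S_L^{-1}(s_0,T)$, and both summands lie in $\boundOP(V)$. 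Therefore $R(s,s_0)\Q_{s_0}(T)^{-1}$ is bounded with norm tending to zero as $s\to s_0$, so a Neumann series inverts $\id + R(s,s_0)\Q_{s_0}(T)^{-1}$ on a neighbourhood of $s_0$; this gives $\Q_s(T)^{-1}\in\boundOP(V)$ there. Hence $\sigma_S(T)$ is closed in $\hh$. For the extended spectrum, recall that in the one-point compactification $\hh_\infty$ a set containing $\infty$ is closed iff its intersection with $\hh$ is closed in $\hh$, while a set not containing $\infty$ is closed iff it is compact in $\hh$. Together with part (iii) this handles both branches of the definition of $\sigma_{SX}(T)$, and compactness follows from $\hh_\infty$ being compact.

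For \textbf{(iii)}, I would first show that any $s$ with $|s|>\|T\|$ lies in $\rho_S(T)$ by constructing $\Q_s(T)^{-1}$ explicitly. Writing $\Q_s(T)=|s|^2\bigl(\id - |s|^{-2}(2\Re(s)T-T^2)\bigr)$ and using that $\|2\Re(s)T-T^2\|/|s|^2$ is small when $|s|>\|T\|$ (after routine estimation), a Neumann series yields a bounded inverse; equivalently one can verify that $\sum_{n\ge 0}T^n s^{-1-n}$ converges and represents $S_L^{-1}(s,T)$, which via the identity $S_L^{-1}(s,T)=\Q_s(T)^{-1}\overline{s}-T\Q_s(T)^{-1}$ gives the boundedness of $\Q_s(T)^{-1}$. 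This proves $\sigma_S(T)\subset\overline{B_{\|T\|}(0)}$. For non-emptiness, assume by contradiction that $\sigma_S(T)=\emptyset$. Then $s\mapsto S_L^{-1}(s,T)$ is a right slice hyperholomorphic $\boundOP(V)$-valued function on all of $\hh$. The series expansion just mentioned shows $\|S_L^{-1}(s,T)\|\to 0$ as $|s|\to\infty$, so, testing against $v\in V$ and a continuous right linear functional, Liouville's theorem for slice hyperholomorphic functions forces $S_L^{-1}(s,T)\equiv 0$, contradicting $\Q_s(T)S_L^{-1}(s,T)v = v$ for a nonzero $v$. Hence $\sigma_S(T)\neq\emptyset$.

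The only step with any subtlety is the perturbation argument in (ii): because $T$ is unbounded, one cannot naively write $R(s,s_0)\Q_{s_0}(T)^{-1}$ as bounded; the key point is extracting the boundedness of $T\Q_{s_0}(T)^{-1}$ from the definition of $S_L^{-1}(s_0,T)$. Once that observation is in hand, everything else is a direct computation.
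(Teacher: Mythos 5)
The paper offers no proof of Theorem~\ref{SpecProp}; it quotes the result from \cite{acgs,MR2752913,DA}, so your argument has to stand on its own. Parts (i) and (ii) do: (i) is exactly the observation that $\Q_s(T)$ depends on $s$ only through $\Re(s)$ and $|s|^2$, and the perturbation argument in (ii) is sound, including the key point that $T\Q_{s_0}(T)^{-1}$ is bounded (the cleanest justification is the closed graph theorem applied to this closed, everywhere-defined operator, which avoids any appearance of circularity with the boundedness of $S_L^{-1}(s_0,T)$). Your reduction of the closedness of $\sigma_{SX}(T)$ in $\hh_\infty$ to (ii) and (iii) is also correct.

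The genuine gap is in part (iii). The estimate you call routine does not close: $\|2\Re(s)T-T^2\|\leq 2|s|\,\|T\|+\|T\|^2$, so $\||s|^{-2}(2\Re(s)T-T^2)\|<1$ is only guaranteed for $|s|>(1+\sqrt{2})\|T\|$; for $\|T\|<|s|\leq(1+\sqrt{2})\|T\|$ the Neumann series need not converge (already for a scalar $\lambda=-\|T\|$ and real $s$ slightly larger than $\|T\|$ one has $|2s\lambda-\lambda^2|\approx 3\|T\|^2>|s|^2$), so this route yields only $\sigma_S(T)\subset\overline{B_{(1+\sqrt{2})\|T\|}(0)}$. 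Your fallback via $\sum_{n\geq0}T^ns^{-1-n}$ is the right idea but is stated circularly: the series cannot ``represent $S_L^{-1}(s,T)$'' before one knows $s\in\rho_S(T)$. The standard repair uses right linearity of $T$ to factor $\Q_s(T)=(T-\mathcal{R}_s)(T-\mathcal{R}_{\overline{s}})$, where $\mathcal{R}_qv:=vq$, and inverts each factor by the Neumann series $-\sum_{n\geq0}T^n\mathcal{R}_{q^{-(n+1)}}$, which converges precisely for $|q|>\|T\|$ and gives $\Q_s(T)^{-1}\in\boundOP(V)$ on all of $\{|s|>\|T\|\}$. Two smaller slips in the nonemptiness argument: the identity $\Q_s(T)S_L^{-1}(s,T)v=v$ is false (for $v\in\dom(T)=V$ one gets $v\overline{s}-Tv$, which still produces a contradiction, just not the one you wrote); and scalarizing by ``testing against $v$ and a functional'' is delicate, since right multiplication by a fixed vector does not preserve right slice hyperholomorphy --- one should apply the quaternionic Liouville theorem directly to the $\boundOP(V)$-valued function $s\mapsto S_L^{-1}(s,T)$.
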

A second important fact is that the pseudo-resolvent  $\Q_{s}(T)^{-1}$ of $T$ is not the resolvent of the $S$-functional calculus. In particular,  the operator-valued function $s\mapsto\Q_{s}(T)^{-1}$ is not  slice hyperholomorphic.
Instead, we have two different $S$-resolvent operators.
\begin{definition}\label{SResDef}
Let $T\in\closOP(V)$. For $s\in\rho_S(T)$, the left $S$-resolvent operator is defined as
\begin{equation}\label{SresolvoperatorL}
S_L^{-1}(s,T):= \Q_s(T)^{-1}\overline{s} -T\Q_s(T)^{-1}
\end{equation}
and the right $S$-resolvent operator is defined as
\begin{equation}\label{SresolvoperatorR}
S_R^{-1}(s,T):=-(T-\id \overline{s})\Q_s(T)^{-1}.
\end{equation}
\end{definition}
The $S$-resolvent operators reduce to the classical resolvent if $T$ and $s$ commute, that is
\[S_L^{-1}(s,T) = S_R^{-1}(s,T) = (s\id - T)^{-1}.\]
Furthermore, the right $S$-resolvent is obviously obtained by formally replacing $p$ by $T$ in the right slice hyperholomorphic Cauchy kernel. Formally replacing $p$ by $T$ in the left Cauchy kernel yields the operator $-\Q_{s}(T)^{-1}(T-\overline{s}\id)$, which is only defined on $\dom(T)$. The left $S$-resolvent defined in \eqref{SresolvoperatorL} is the closed extension of this operator and defined on all of $V$. In particular, if $T$ is bounded, then $\dom(T) = V$ and so $S_L^{-1}(s,T) = -\Q_{s}(T)^{-1}(T-\overline{s}\id)$.

The following lemma is crucial and can be shown by straightforward computations for bounded operators. In the case of unbounded operators, several additional technical difficulties have to be overcome. The respective proof can be found in \cite{FJTAMS}.
\begin{theorem}
Let $T\in\closOP(V)$. The map $s\mapsto S_L^{-1}(s,T)$ is a right slice-hyperholmorphic function on $\rho_S(T)$ with values in the two-sided  quaternionic Banach space $\boundOP(V)$. The map $s\mapsto S_R^{-1}(s,T)$ is a left slice-hyperholmorphic function on $\rho_S(T)$ with values in the two-sided quaternionic Banach space $\boundOP(V)$.
\end{theorem}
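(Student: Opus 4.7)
\emph{Strategy.} The plan is to verify directly from Definition \ref{sHolDef} that $s\mapsto S_L^{-1}(s,T)$ admits a decomposition of the form $\alpha(s_0,s_1)+\beta(s_0,s_1)j_s$ satisfying the compatibility condition \eqref{CCond} and the Cauchy--Riemann system \eqref{CR} with values in $\boundOP(V)$, and symmetrically for $S_R^{-1}(s,T)$ with $j_s$ on the left. The open set $\rho_S(T)$ is automatically axially symmetric, since $\sigma_S(T)$ is (Theorem \ref{SpecProp}), so the slice-function framework applies on the whole domain.

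\emph{Decomposition and compatibility.} Write $s=s_0+j_s s_1$ with $s_0=\Re(s)$, $s_1=|\underline{s}|$. The pseudo-resolvent $\Q_s(T)=T^2-2s_0T+(s_0^2+s_1^2)\id$ depends only on $s_0$ and $s_1^2$, so $\Q_s(T)^{-1}$ is an even function of $s_1$. Inserting $\bar s=s_0-j_s s_1$ into the $S$-resolvent formulas \eqref{SresolvoperatorL}--\eqref{SresolvoperatorR} yields
\[
S_L^{-1}(s,T)=\bigl[s_0\Q_s(T)^{-1}-T\Q_s(T)^{-1}\bigr]+\bigl[-s_1\Q_s(T)^{-1}\bigr]j_s,
\]
\[
S_R^{-1}(s,T)=\bigl[s_0\Q_s(T)^{-1}-T\Q_s(T)^{-1}\bigr]+j_s\bigl[-s_1\Q_s(T)^{-1}\bigr],
\]
giving the natural candidates $\alpha:=s_0\Q_s(T)^{-1}-T\Q_s(T)^{-1}$ and $\beta:=-s_1\Q_s(T)^{-1}$. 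The parity of $\Q_s(T)^{-1}$ in $s_1$ makes $\alpha$ even and $\beta$ odd in $s_1$, which is exactly \eqref{CCond}.

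\emph{Cauchy--Riemann verification.} From the identity $\partial_{s_k}\Q_s(T)^{-1}=-\Q_s(T)^{-1}\bigl(\partial_{s_k}\Q_s(T)\bigr)\Q_s(T)^{-1}$, with $\partial_{s_0}\Q_s(T)=-2T+2s_0\id$ and $\partial_{s_1}\Q_s(T)=2s_1\id$, one obtains
\[
\partial_{s_0}\Q_s(T)^{-1}=2\Q_s(T)^{-1}(T-s_0\id)\Q_s(T)^{-1},\qquad \partial_{s_1}\Q_s(T)^{-1}=-2s_1\bigl(\Q_s(T)^{-1}\bigr)^2.
\]
Using that $\Q_s(T)^{-1}$ and $T$ commute on the appropriate domains (since $\Q_s(T)$ is polynomial in $T$), a short calculation yields $\partial_{s_0}\alpha=\partial_{s_1}\beta$ and $\partial_{s_1}\alpha=-\partial_{s_0}\beta$, that is \eqref{CR}.

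\emph{Main obstacle.} The principal technical difficulty is the unbounded case, in which $T\Q_s(T)^{-1}$ and $T^2\Q_s(T)^{-1}$ must be understood as \emph{bounded} operators on all of $V$. This is resolved via the closed graph theorem: for $s\in\rho_S(T)$ one has $\ran\Q_s(T)^{-1}=\dom(T^2)\subseteq\dom(T)$, while $T$ and $T^2$ are closed and $\Q_s(T)^{-1}\in\boundOP(V)$, so the compositions $T\Q_s(T)^{-1}$ and $T^2\Q_s(T)^{-1}$ are everywhere defined and closed, hence lie in $\boundOP(V)$. Once this boundedness is secured, the difference-quotient arguments producing the partial derivatives $\partial_{s_0}$ and $\partial_{s_1}$ are valid in the operator norm on $\boundOP(V)$, and the Cauchy--Riemann equations above establish right slice hyperholomorphicity of $s\mapsto S_L^{-1}(s,T)$ and, by the symmetric placement of $j_s$, left slice hyperholomorphicity of $s\mapsto S_R^{-1}(s,T)$.
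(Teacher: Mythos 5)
Your proof is correct and follows essentially the route the paper points to: the paper does not prove this theorem itself but defers to \cite{FJTAMS}, where the argument is exactly the direct verification you carry out --- decompose the $S$-resolvents as $\alpha+\beta j_s$ resp.\ $\alpha+j_s\beta$ with $\alpha=(s_0\id-T)\Q_s(T)^{-1}$ and $\beta=-s_1\Q_s(T)^{-1}$, obtain \eqref{CCond} from the evenness of $\Q_s(T)^{-1}$ in $s_1$, verify \eqref{CR} using $\Q_s(T)\Q_s(T)^{-1}=\id$, and handle the ``technical difficulties'' of the unbounded case by securing boundedness of $T\Q_s(T)^{-1}$ via the closed graph theorem. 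One small repair: for a closed $T$ the operator $T^2$ need not be closed, so instead of invoking the closed graph theorem for $T^2\Q_s(T)^{-1}$ you should get its boundedness from the identity $T^2\Q_s(T)^{-1}=\id+2s_0\,T\Q_s(T)^{-1}-|s|^2\Q_s(T)^{-1}$, which also streamlines the justification of the difference-quotient limits for $\partial_{s_0}\Q_s(T)^{-1}$ and $\partial_{s_1}\Q_s(T)^{-1}$.
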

The $S$-resolvent equation is the analogue of the classical resolvent equation in the quaternionic setting. It has first been proved for the case that $T$ is a bounded operator in \cite{acgs}. Later on it was generalized to the case that $T$ is unbounded in \cite{FJTAMS}.
We recall the $S$-resolvent equation because
it is one of the main differences between the complex and the quaternionic case. Observe that it involves both the $S$-resolvent operators.

\begin{theorem}[$S$-resolvent equation]Let $T\in\closOP(V)$. For  $s,p \in  \rho_S(T)$ with $s\notin[p]$, it is
\begin{equation}\label{SresEQ1}
\begin{split}
S_R^{-1}(s,T)S_L^{-1}(p,T)=&\big[[S_R^{-1}(s,T)-S_L^{-1}(p,T)]p
\\
&
-\overline{s}[S_R^{-1}(s,T)-S_L^{-1}(p,T)]\big](p^2-2s_0p+|s|^2)^{-1}.
\end{split}
\end{equation}
\end{theorem}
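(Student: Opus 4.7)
Since $s\notin[p]$, the scalar $\mathcal{Q}_s(p):=p^2-2\Re(s)p+|s|^2$ is an invertible quaternion, so the claimed identity is equivalent, after multiplying on the right by $\mathcal{Q}_s(p)$, to the denominator-free identity
\[
S_R^{-1}(s,T)\,S_L^{-1}(p,T)\,\mathcal{Q}_s(p) = [S_R^{-1}(s,T)-S_L^{-1}(p,T)]p - \overline{s}\,[S_R^{-1}(s,T)-S_L^{-1}(p,T)].
\]
The approach is a direct algebraic verification: I would first handle bounded $T$ by pure symbol manipulation, and then extend to unbounded $T$ by a brief domain argument.

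For the bounded case, the calculation rests on three commutation facts that follow from $\Q_s(T)=T^2-2\Re(s)T+|s|^2\id$ being a polynomial in $T$ with real coefficients: (i) $T$ commutes with both $\Q_s(T)^{-1}$ and $\Q_p(T)^{-1}$; (ii) $\Q_s(T)^{-1}$ and $\Q_p(T)^{-1}$ commute with each other; (iii) both pseudo-resolvents commute with left multiplication by every quaternion scalar. Using (i)--(iii) together with the alternative writings $S_R^{-1}(s,T)=-(T-\overline{s}\id)\Q_s(T)^{-1}$ and $S_L^{-1}(p,T)=-\Q_p(T)^{-1}(T-\overline{p}\id)$, the left-hand side collapses to
\[
S_R^{-1}(s,T)\,S_L^{-1}(p,T) = (T-\overline{s}\id)(T-\overline{p}\id)\,\Q_s(T)^{-1}\Q_p(T)^{-1}.
\]
For the right-hand side, I would expand the difference $D:=S_R^{-1}(s,T)-S_L^{-1}(p,T)$ by means of the resolvent-type identity
\[
\Q_p(T)^{-1}-\Q_s(T)^{-1}=\Q_s(T)^{-1}\bigl[\Q_s(T)-\Q_p(T)\bigr]\Q_p(T)^{-1},
\]
combined with the scalar-coefficient relation
\[
\Q_s(T)-\Q_p(T)=-2(\Re(s)-\Re(p))T + (|s|^2-|p|^2)\id,
\]
which is a first-order polynomial in $T$ with real coefficients and hence commutes with $T$ and with both pseudo-resolvents. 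This puts $D$ into a form in which the only operator factor is $\Q_s(T)^{-1}\Q_p(T)^{-1}$; a careful expansion of $Dp-\overline{s}D$ then groups the scalar prefactors into precisely $(T-\overline{s}\id)(T-\overline{p}\id)\,\mathcal{Q}_s(p)$ multiplied by $\Q_s(T)^{-1}\Q_p(T)^{-1}$, matching the expression obtained for the left-hand side.

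For unbounded $T$ the formal manipulations involve $T^2$, but this causes no difficulty: both pseudo-resolvents $\Q_s(T)^{-1}$ and $\Q_p(T)^{-1}$ map $V$ into $\dom(T^2)$, so every intermediate operator appearing in the expansion is actually defined on all of $V$; equivalently one can verify the identity on the dense subspace $\dom(T^2)$ and then extend by boundedness of the $S$-resolvents. The principal obstacle throughout is the quaternionic non-commutativity: although $\Q_s(T)^{-1}$ and $\Q_p(T)^{-1}$ are insensitive to the side on which one multiplies by a scalar, the operator $T$ does \emph{not} commute with left multiplication by the non-real scalars $\overline{s},\overline{p},p$ that appear in the identity. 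Every reordering of these scalars with $T$ has to be justified explicitly, and it is this bookkeeping, rather than any deeper structural obstruction, that makes the computation delicate.
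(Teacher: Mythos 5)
There is a genuine gap, and it sits at the heart of your computation: your commutation fact (iii) --- that $\Q_s(T)^{-1}$ and $\Q_p(T)^{-1}$ commute with \emph{left} multiplication by every quaternion scalar --- is false, and it is even inconsistent with your own closing remark. You correctly observe that $T$ need not commute with left multiplication $L_a: v\mapsto av$ for non-real $a$; but $\Q_s(T)^{-1}=(T^2-2\Re(s)T+|s|^2\id)^{-1}$ is built from $T$, so it inherits exactly the same failure. A concrete counterexample: take $V=\hh$ and $T=L_{e_1}$. Then $\Q_s(T)$ is left multiplication by the quaternion $(|s|^2-1)-2s_0e_1$, whose inverse is left multiplication by a non-real quaternion and therefore does not commute with $L_{\overline p}$ for generic $p$. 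Without (iii), both of your key reductions collapse: the identity $S_R^{-1}(s,T)S_L^{-1}(p,T)=(T-\overline s\id)(T-\overline p\id)\Q_s(T)^{-1}\Q_p(T)^{-1}$ requires pushing $\overline p$ through $\Q_s(T)^{-1}\Q_p(T)^{-1}$, and the reduction of $D=S_R^{-1}(s,T)-S_L^{-1}(p,T)$ to a single operator factor $\Q_s(T)^{-1}\Q_p(T)^{-1}$ requires pushing $\overline s$ and $\overline p$ through the pseudo-resolvents. Indeed, if all these commutations held, the whole identity would degenerate to the classical resolvent equation $R_sR_p=(R_p-R_s)(p-s)^{-1}$; the peculiar two-sided form of \eqref{SresEQ1}, with $p$ acting on the right and $\overline s$ on the left, exists precisely because they do not.

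For comparison: the paper does not prove the theorem itself but cites \cite{acgs} (bounded case) and \cite{FJTAMS} (unbounded case), and the argument there avoids any such commutation. One starts from the two one-sided $S$-resolvent equations, $S_L^{-1}(p,T)p-TS_L^{-1}(p,T)=\id$ on $V$ and $sS_R^{-1}(s,T)v-S_R^{-1}(s,T)Tv=v$ for $v\in\dom(T)$, writes $-2s_0=-(s+\overline s)$ and $|s|^2=\overline s s$, and expands $S_R^{-1}(s,T)S_L^{-1}(p,T)\bigl(p^2-2s_0p+|s|^2\bigr)$ term by term, substituting the left equation for every occurrence of $S_L^{-1}(p,T)p$ and the right equation for every occurrence of $S_R^{-1}(s,T)T$ (legitimate because $S_L^{-1}(p,T)$ maps $V$ into $\dom(T)$). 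All terms containing $T$ or $s^2$ cancel and one is left with $[S_R^{-1}(s,T)-S_L^{-1}(p,T)]p-\overline s[S_R^{-1}(s,T)-S_L^{-1}(p,T)]$. If you want to repair your write-up, replace steps (i)--(iii) and the two ``collapses'' by this substitution scheme; your framing of the problem (clearing the scalar denominator $\Q_s(p)$, and the care about domains in the unbounded case) is fine and can be kept.
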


Any vector $v$ in the two-sided quaternionic vector space $V$ can be written in a unique way as $v = v_0 + \sum_{\ell = 1}^3 v_{\ell} e_{\ell}$ with components $v_{\ell}$ in the real Banach space $V_{\rr} := \{v\in V: a v = va\ \forall a\in\hh\}$. We write $V = V_{\rr}\otimes \hh$ and say that $V$ is the quaternionification of $V_{\rr}$. Any bounded operator $T$ on $V$ can then be written as $T = T_0 + \sum_{\kappa=1}^{3} T_{\kappa}e_{\ell}$ with components $T_{\ell}\in\boundOP(V_{\rr})$ i.e. they are bounded $\rr$-linear operators on $V_{\rr}$. We then have
\[
T v = \sum_{\kappa,\ell=0}^3 e_{\kappa}e_{\ell} T_{\kappa}v_{\ell},
\]
where we set $e_0 := 1$ for neatness. If $T$ is unbounded, then we can only write $T$ in terms of $\rr$-linear components if $\dom(T)$ is of the form $\dom(T) = \{ \sum_{\ell} v_{\ell}e_{\ell}: v_{\ell}\in W\}$ for some $\rr$-linear subspace $W$. Then $T = T_0 + \sum_{\kappa = 1}^3 T_{\kappa}e_{\kappa}$ with $\rr$-linear components $T_{\kappa}: W \to V_{\rr}, \kappa = 0,\ldots, 3$. Conversely, if $T_{0},\ldots,T_{3}$ are $\rr$-linear operators on $V$, then $T = \sum_{\kappa=0}^3T_{\kappa}e_{\kappa}$ is a quaternionic linear operator with domain $\dom(T) = \bigcap_{\kappa = 0}^3 \dom(T_{\kappa}) \otimes\hh$.

For operators with commuting components, the $S$-resolvent operators admits a second representation and the $S$-spectrum can be computed more easily.  Precisely, we consider the class $\closOP(V)$ which consists of those operators  $ T= T_0 + \sum_{\ell=1}^{3}T_{\ell}e_{\ell}$ such that the components
$T_{\ell}:\mathcal{D}(T_\ell) \subset V_{\rr}\to V_{\rr}$, $\ell = 0,\ldots,3$ are closable operators on $V_{\rr}$
that  commute mutually, that is $T_{\ell} T_{\kappa}=T_{\kappa} T_{\ell}$ on $\mathcal{D}(T_{\ell}T_{\kappa})\cap \mathcal{D} (T_{\kappa}T_{\ell})$ for $\ell,\kappa =0,\ldots, 3$.
 The domain of $T$ is $\mathcal{D}(T)= \cap_{\ell=0}^3 \mathcal{D}(T_\ell)$ and if we set
\[
\overline{T} := T_0 - \sum_{\ell = 1}^3 T_{\ell}e_{\ell},
\]
then the domain of $\overline{T}$  is $\mathcal{D}(\overline{T})= \mathcal{D}({T})$.  For $s\in\hh$, we define the commutative pseudo-resolvent operator as
\[
\Q_{c,s}(T) := s^2 \id - 2s T_0  + T\overline{T}: \ \ \ \mathcal{D}(T^2)\subset V\to V.
\]
Observe that the operators  $T_0$ and $T\overline{T}  = \sum_{\kappa=0}^3 T_{\kappa}^2$ do not contain the imaginary units of the quaternions. The operator $\Q_{c,s}(T)$ in invertible if and only if $\Q_{s}(T)$ is invertible and so
\begin{equation}\label{SSpecCommut}
\rho_S(T) = \left\{ s\in \hh : \Q_{c,s}(T)^{-1}\in\boundOP(X) \right\}.
\end{equation}
Moreover, for $s\in\rho_{S}(T)$, the $S$-resolvent operators can be written as
\begin{align}\label{ZwoaDreiVia}
S_L^{-1}(s,T) = &(s\id- \overline{T})\Q_{c,s}(T)^{-1}\\
\intertext{and}
\label{ZwoaDreiVia1}
 S_R^{-1}(s,T) = & \Q_{c,s}(T)^{-1}s - \sum_{\kappa = 0}^3 T_{\kappa}\Q_{c,s}(T)^{-1}\overline{e_{\kappa}}.
\end{align}

Using the above definitions and the quaternionic $H^\infty$-functional calculus, which is an extension of the $S$-functional calculus for sectorial operators, we can define fractional powers of closed sectorial operators, i.e. operators $T$ such that $(-\infty,0)\subset \rho_{S}(T)$ and $\|S_{L}^{-1}(s,T)\| = \|S_{R}^{-1}(s,T) \leq C/|s|$ for some $C>0$ and any $s\in (-\infty,0)$. For $\alpha \in (0,+\infty)$, such fractional power can be represented via Balakrishnan's formula
\begin{equation}\label{BII}
T^{\alpha}v = -\frac{\sin(\alpha\pi)}{\pi}\int_{0}^{+\infty}t^{\alpha-1}S_{R}^{-1}(s,T)Tv \,dt\qquad\forall v\in \dom(T). 
\end{equation}

Unfortunately, the gradient  $\nabla$ on $L^2(\rr^3,\hh)$ does not turn out to be a quaternionic sectorial operator: its $S$-spectrum contains only real points and whenever a positive point $s>0$ belongs to $\sigma_S(T)$, then also $-s$ belongs to $\sigma_S(T)$ so that for sure $(-\infty,0)\not\subset \rho_S(T)$. Moreover, the princial branch of $s^{\alpha}$ is not defined on $(-\infty,0]$, which contains spectral values of $\nabla$. Hence, defining $\nabla^{\alpha}$ is not possible by simply applying the existing theory and using formula \eqref{BII}. For complex linear operators, one could try to choose a different branch of the fractional power $s^{\alpha}$ that is defined on $(-\infty,0]$ and then use this branch to define $\nabla^{\alpha}$ using a suitable version of the $S$-functional calculus or the quaternionic $H^{\infty}$-functional calculus.
Choosing different branches of fractional powers is however not possible in the quaternionic setting. Instead we had to choose a modified approach. 

The slice hyperholomorphic logarithm on $\hh$ is defined as
\[
\log s:=\ln |s|+j_s \arg(s)\qquad\text{for } s\in \mathbb{H}\setminus (-\infty, 0],
\]
where $\arg(s) =  \arccos(s_0/|s|)$ is the unique angle $\varphi\in[0,\pi]$ such that $s = |s| e^{j_s\varphi}$.
Observe that for $s=s_0\in[0,\infty)$ we have  $\arccos(s_0/|s|) = 0$ and so $\log s = \ln s$. Therefore, $\log s$ is well defined on the positive real axis and does not depend on the choice of the imaginary unit $j_s$. One has
\[ e^{\log s} = s  \quad\text{for } s\in\hh\]
and
\[ \log e^s = s \quad\text{for }s\in\hh\ \ \text{ with }|\underline{s}|<\pi.\]
The quaternionic logarithm is both left and right slice hyperholomorphic (and actually even intrinsic) on $\hh\setminus (-\infty,0]$ and for any $j\in\SS$ its restriction to the complex plane $\cc_j$ coincides with the principal  branch of the complex logarithm on $\cc_j$.
We define the fractional powers of exponent $\alpha\in \mathbb{R}$ of a quaternion $s$ as
\[
s^{\alpha}:= e^{\alpha\log s}=e^{\alpha (\ln |s|+j_s \arccos(s_0/|s|))}, \ \ \  s\in \mathbb{H}\setminus (-\infty, 0].
\]
This function is obviously also left and right slice hyperholomorphic on $\hh\setminus(-\infty, 0]$. Note however that if we define fractional powers $s^{\alpha}$ with $\alpha\in\hh\setminus\rr$ by the above formula, we do not obtain a slice hyperholomorphic function because the composition of two slice hyperholomorphic functions is in general only slice hyperholomorphic if the inner function is intrinsic.
We choose now  $j\in\SS$ and set for $\alpha\in(0,1)$ 
\begin{equation}\label{Palpha1}
P_\alpha(T) v := \frac{1}{2\pi} \int_{-j\rr}   S_L^{-1}(s,T)\,ds_j\, s^{\alpha-1} T v,\ \ \ v\in \mathcal{D}(T).
\end{equation}
This corresponds to Balakrishnan's formula for the $T^{\alpha}$, where just a part of the $S$-spectrum is taken into account, namely the part of the spectrum contained in the positive real axis, where the function $s^{\alpha}$ is actually defined.
An analogue Balakrishnan's formula for $P_\alpha(T)$ holds using the right $S$-resolvent operator $S_R^{-1}(s,T)$
\begin{equation}\label{Palpha2}
P_\alpha(T) v := \frac{1}{2\pi} \int_{-j\rr} s^{\alpha-1}  \,ds_j\, S_R^{-1}(s,T) T v, \ \ \ v\in \mathcal{D}(T),
\end{equation}
 and it defines the same operator $P_\alpha(T)$.

\section{A new strategy to fractional diffusion processes}

Using the formulas \eqref{Palpha1} and \eqref{Palpha2}, we can now reduce the problem
of defining the fractional powers of a vector operator $T$ to a problem in partial differential equations
in which we show that $S$-resolvent operators $S_L^{-1}(s,T)$ and $S_R^{-1}(s,T)$ are well defined and satisfy suitable growth conditions in $|s|$
when  the coefficients that define the vector operator $T$ satisfy suitable estimates.
The procedure is not immediate  and will be explained in some steps.
\\
\\
First we consider the case when $T$ is minus the gradient operator because in this case we expect that our new method,
based on the spectral theory on the $S$-spectrum, reproduces the classical fractional heat equation that contains the fractional powers of the negative Laplace operator. So
we consider the heat flux $\mathbf{q}(\nabla)  = -\nabla v$, where $v$ is the temperature, we identify
$
\mathbb{R}^3 \ \cong \{ s\in\mathbb{H}: \Re (s) = 0\}
$
and we consider the gradient operator $\nabla$ as the quaternionic  operator
\begin{equation}\label{GRD}
\mathbf{q}(\nabla)v  = -(e_1\partial_{x_1} + e_2 \partial_{x_2} +  e_3\partial_{x_3})v.
\end{equation}
Instead of replacing the negative Laplacian $(-\Delta)^{\alpha}$ in the heat equation, we replace the fractional gradient in  the equation
$$
\partial_t v(t,x) +\operatorname{div} \mathbf{q}(\nabla)v(t,x) = 0.
$$
The following two observations are of crucial importance in order to define the new procedure to fractional diffusion processes.
\begin{itemize}
\item[(I)]
Since  $s^{\alpha}$, for $\alpha\in (0,1)$,  is not defined on $(-\infty,0)$
and  on $L^2(\mathbb{R}^3,\mathbb{H})$ it is $\sigma_{S}(\nabla) = \mathbb{R}$
we consider the projections of the fractional powers of $\nabla^{\alpha}$, indicated by  $P_{\alpha}(\nabla)$, to the subspace associated with the subset $[0,+\infty)$ of the $S$-spectrum of $\nabla$, on which the function $s^{\alpha}$ is well defined and it is slice hyperholomorphic.
\item[(II)]
The above procedure gives a quaternionic operator
$$
P_{\alpha}(\nabla)=T_0+e_1T_1+e_2T_2+e_3T_3,
$$
 where $T_\ell$, $\ell=0,1,2,3$ are real operators obtained by the functional calculus.
Finally, we take the vector part
$$
{\rm Vect}(P_{\alpha}(\nabla))=e_1T_1+e_2T_2+e_3T_3
$$ of the quaternionic operator $P_{\alpha}(\nabla)$
so that  we can apply the divergence operator.
\end{itemize}

More explicitly we
define  $\nabla^{\alpha}$ only on the subspace associated to $[0,\infty)$  that is
\[
P_{\alpha}(\nabla) v = \frac{1}{2\pi} \int_{-j\mathbb{R}} S_L^{-1}(s,\nabla)\,ds_j\, s^{\alpha-1} \nabla v,
\]
for $v:\mathbb{R}^3\to \mathbb{R}$  in $\dom(\nabla)$, where the path integral is computed taking into account just the part of the $S$-spectrum with
${\rm Re}(\sigma_S(\nabla))\geq 0$ since  we have proven in \cite{64FRAC} that the $S$-spectrum of the gradient is the entire real line.
With this definition and the surprising expression for the left $S$-resolvent operator
\[
S_L^{-1}(-jt ,\nabla) = (-jt + \nabla)\underbrace{(-t^2 + \Delta)^{-1}}_{= R_{-t^2}(-\Delta)},
\]
 the fractional powers $P_{\alpha}(\nabla)$ become
$$
P_{\alpha}(\nabla) v
= \underbrace{ \frac{1}{2} (-\Delta)^{\frac{\alpha}{2}-1}\nabla^2 v}_{\mathrm{Scal} P_{\alpha}(\nabla)v} + \underbrace{\frac{1}{2} (-\Delta)^{\frac{\alpha-1}{2}}\nabla v}_{=\mathrm{Vec}P_{\alpha}(\nabla) v}.
$$
We define the scalar part  of the operator $P_{\alpha}(\nabla) $ applied to $v$ as
$$
\mathrm{Scal} P_{\alpha}(\nabla)v:=\frac{1}{2}  (-\Delta)^{\frac{\alpha}{2}-1}\nabla^2 v,
$$
and the vector part as
$$
\mathrm{Vec}P_{\alpha}(\nabla) v:=\frac{1}{2} (-\Delta)^{\frac{\alpha-1}{2}}\nabla v.
$$
Now we observe hat
\[
 {\rm div} \mathrm{Vec}P_{\alpha}(\nabla) v = -\frac{1}{2} (-\Delta)^{\frac{\alpha}{2}+1} v
\]
This shows that in the case of the gradient we get the same result, that is the fractional Laplacian.
 The fractional heat equation for $\alpha\in(1/2,1)$
\[
\partial_t v(t,x) + (-\Delta)^{\alpha} v(t,x) = 0
\]
 can hence be written as
\[
\partial_t v(t,x) - 2 {\rm div}\left(\mathrm{Vec} P_{\beta}(\nabla)v\right)  = 0, \qquad \beta = 2\alpha - 1
\]
so we have shown that this approach coincided with the classical one.
\\
\\
Before we state our new approach to fractional diffusion processes we will make some considerations
regarding the case when the components of the operator $T$ commute among themselves and when they do not commute.
When we consider non homogeneous materials in $\Omega \subseteq \mathbb{R}^3$, where $\Omega$ can be bounded or unbounded, then
  Fourier's law becomes
\begin{equation}\label{TNONCOM}
T:=\mathbf{q}({x},\partial_{x})= e_1 \,a(x)\partial_{x_1} +  e_2\, b(x)\partial_{x_2} +  e_3\, c(x)\partial_{x_3},\ \ \ x=(x_1,x_2,x_3)
\end{equation}
where $e_\ell$, $\ell=1,2,3$ are orthogonal unit vectors in $\mathbb{R}^3$ and
the coefficients $a$, $b$, $c:\Omega \to \mathbb{R}$ depend on the space variables $x=(x_1,x_2,x_3)$, and possibly on time.
In this case to define the fractional powers of $T$ we need to show that the pseudo $S$-resolvent operator
\begin{equation}\label{TCOM}
\Q_{s}(T):=T^2 - 2\Re(s)T + |s|^2\id
\end{equation}
 is invertible. In this case the computations can be quite complicated and will considered in a future publication. In this paper we consider the commutative case that gives a better understanding of the our new procedure as we will see in the following.
 \\
 \\
 When $T$ has commuting components, that is $T$ is a vector operator of the form
$$
T=e_1\,a_1(x_1)\partial_{x_1} +  e_2\,a_2(x_2)\partial_{x_2} +  e_3\,a_3(x_3)\partial_{x_3}
$$
where $a_1$, $a_2$, $a_3:\Omega \to \mathbb{R}$ are suitable real valued function that depend on the space variables $x_1$, $x_2$, $x_3$, respectively, then the $S$-spectrum can also be determined by the commutative pseudo-resolvent operator
$$
Q_{c,s}(T) := s^2 \id - 2s T_0  + T\overline{T}=a_1^2(x_1)\partial^2_{x_1} +  a_2^2(x_2)\partial^2_{x_2} +  a_3^2(x_3)\partial^2_{x_3}+s^2\mathcal{I}
$$
because $\Q_{c,s}(T)$ is invertible if and only if $\Q_{s}(T)$ is invertible. The operator $\Q_{c,s}(T)$ is a scalar operator
if $s^2$ is a real number, since $T$ is a vector operator, we have $T_0=0$ and $T\overline{T}$ does not contain the imaginary units of the quaternions.
Using the non commutative expression of the pseudo-resolvent operator
$\Q_{s}(T)$
 we obtain
\[
\begin{split}
\Q_{s}(T)  &=
- (a_1(x_1)\partial_{x_1})^2 - (a_2(x_2)\partial_{x_2})^2 - (a_3(x_3)\partial_{x_3})^2
\\
&
-2s_0(e_1\,a_1(x_1)\partial_{x_1} +  e_2\,a_2(x_2)\partial_{x_2} +  e_3\,a_3(x_3)\partial_{x_3})
+ |s|^2\mathcal{I}.
\end{split}
\]
We observe that according to what we need to show in the commutative case we have two possibilities.
In the next subsection we explicitly write the procedure to defined the $S$-spectrum approach to fractional diffusion processes.

\subsection{ The $S$-spectrum approach to fractional diffusion processes }
\
\medskip

Suppose that $\Omega \subseteq \mathbb{R}^3$ is a suitable bounded or unbounded domain and let $V$ be a two sided Banach space.
We consider  the initial-boundary value problem for non-homogeneous materials. We use the notation $T=\mathbf{q}(x,\partial_x)$ and
we restrict ourselves to the case of homogeneous boundary conditions:
\[
\begin{split}
&T(x) = a_1(x_1)\partial_{x_1} e_1 + a_2(x_2) \partial_{x_2}e_2 + a_3(x_3)\partial_{x_3}e_3,\ \ \ x=(x_1,x_2,x_3)\in \Omega
\\
&
\partial_t v(x,t)  +{\rm div}\, T(x) v(x,t) = 0,\ \ \ \ (x,t)\in \Omega\times (0,T]
\\
&
v(x,0)=f(x),\ \ \ x\in \Omega
\\
&
v(x,t)=0,\ \ x\in \partial \Omega \ \ \ \ t\in [0,T].
\end{split}
\]
Our general procedure consists in the following steps:
\begin{itemize}
\item [(S1)] We study the invertibility of the operator
$$
Q_{c,s}(T) := s^2 \id - 2s T_0  + T\overline{T}=a_1^2(x_1)\partial^2_{x_1} +  a_2^2(x_2)\partial^2_{x_2} +  a_3^2(x_3)\partial^2_{x_3}+s^2\id
$$
where $\overline{T}=-T$, to get the $S$-resolvent operator.
Precisely, let $F:\Omega\to \mathbb{H}$  be a given function with a suitable regularity and
denote by  $X:\Omega\to \mathbb{H}$ the unknown function of the boundary value problem:
\[
\begin{split}
&\Big(a_1^2(x_1)\partial^2_{x_1} +  a_2^2(x_2)\partial^2_{x_2} +  a_3^2(x_3)\partial^2_{x_3}+s^2 \id\Big)X(x)=F(x),\ \ \ x\in \Omega
\\
&
X(x)=0,\ \ x\in \partial \Omega.
\end{split}
\]
We study under which conditions on the coefficients $a_1$, $a_2$, $a_3:\mathbb{R}^3\to \mathbb{R}$ the above equation has a unique solution.
We can similarly use the non commutative version of the pseudo-resolvent operator $\Q_{s}(T)$.
In the case we deal with an operator $T$ non commuting components then we have to consider $\Q_{s}(T)$, only.
\item[(S2)]
From (S1) we get the unique pesudo-resolvent operator $\Q_{c,s}(T)^{-1}$ and so
can define the $S$-resolvent operator
$$
S_L^{-1}(s,T) = (s\id- \overline{T})\Q_{c,s}(T)^{-1}.
$$
Then we prove
that any $s\in\hh\setminus\{0\}$ with $\Re(s) = 0$ belongs to $\rho_S(T)$ and the $S$-resolvent operators satisfy the estimates
\begin{equation}\label{SREST}
\left\|S_L^{-1}(s,T)\right\|_{\mathcal{B}(V)} \leq \frac{\Theta}{|s|}\quad\text{and}\quad 
\left\|S_R^{-1}(s,T)\right\|_{\mathcal{B}(V)} \leq \frac{\Theta}{|s|},\qquad \text{if }\qquad \Re(s) =0,
\end{equation}
with a constant $\Theta >0$ that does not depend on $s$.
\item[(S3)]
 Using the Balakrishnan's formula we define we define
 $P_{\alpha}(T)$ as:
\[
P_{\alpha}(T) v = \frac{1}{2\pi} \int_{-j\mathbb{R}} s^{\alpha-1}\,ds_j\, S_R^{-1}(s,T) T v, \ \ \ {\rm for}\ \  \alpha\in (0,1),
\]
 and $v\in \mathcal{D}(T)$. Analogously,  one can use the definition of $P_{\alpha}(T)$ related to  the left $S$-resolvent operator.
\\
\item[(S4)]
After we define the fractional powers $P_{\alpha}(T)$ of the vector operator $T$, we consider its vector part
$\VEC(P_\alpha(T))$ and we obtain the fractional evolution equation:
$$
\partial_t v(t,x) - \operatorname{div} ( \VEC(P_\alpha(T)v)(t,x) = 0.
$$
\end{itemize}
This approach has several advantages:
\begin{itemize}
\item[(I)] It  modifies the Fourier law but keeps the law of conservation of energy.
\item[(II)] It is applicable to a large class of operators that includes the gradient but also operators with variable
coefficients.
\item[(III)]
The fractional powers of the operator $T$  is more realistic for non homogeneous materials.
 \item[(IV)] The fact that we keep the evolution equation in divergence form allows an immediate definition of the weak solution of the
fractional evolution problem.
\end{itemize}
In the next section we show explicitly how  the $S$-spectrum approach to fractional diffusion processes
 works in the Hilbert spaces setting.

\section{Fractional Fourier's law in a Hilbert space}

In this section we  show that under suitable conditions on the coefficients $a_\ell(x_\ell)$ for $\ell=1,2,3$
we can define the fractional law associated with the quaternionic operator
\[
T = e_1a_1(x_1)\partial_{x_1} + e_2a_2(x_2)\partial_{x_2} + e_3a_3(x_3)\partial_{x_3}.
\]
 The conditions $a_\ell(x_\ell)$ for $\ell=1,2,3$ are determined by the Lax-Milgram lemma since we work
in a quaternionic Hilbert space.
We define
\[
L^2:= L^2(\Omega,\hh) := \left\{u: \Omega\to\hh: \int_{\Omega}|u(\vx)|^2\,d\vx < + \infty\right\}
\]
with  the scalar product
\[
\langle u,v\rangle_{L^2} := \langle u,v\rangle_{L^2(\Omega,\hh)} := \int_{\Omega} \overline{u(\vx)}v(\vx)\,d\vx,
\]
 where $u(x)=u_0(x)+u_1(x)e_1+u_2(x)e_2+u_3(x)e_3$  and $v(x)=v_0(x)+v_1(x)e_1+v_2(x)e_2+v_3(x)e_3$
for $\vx = (x_1,x_2,x_3)\in \Omega$ and $\Omega\subset\rr^3$ is an open set with smooth boundary.
 We furthermore introduce the quaternionic Sobolev space
 \[
 H^1:= H^1(\Omega,\hh): = \Big\{u\in L^2(\Omega,\hh) : \exists \ g_{\ell,j}(x) \in L^2(\Omega,\mathbb{R}), \ell = 1,2,3, \ j=0,1,2,3
 \]
 \[
{\rm  such\ \  that\ }
 \int_\Omega \partial_{x_\ell}u_j(x)\varphi(x)dx=- \int_\Omega g_{\ell,j}(x)\partial_{x_\ell}\varphi(x)dx, \ \ \forall \varphi\in \mathcal{C}^\infty_c(\Omega,\mathbb{R})
 \Big\}
 \]
 where $\mathcal{C}^\infty_c(\Omega,\mathbb{R})$ is the set of real-valued infinitely differentiable functions with compact support on $\Omega$. With the quaternionic scalar product
 \[
 \langle u,v\rangle_{H^1} := \langle u, v \rangle_{H^1(\Omega,\hh)} := \langle u,v\rangle_{L^2} + \sum_{\ell = 1}^3 \left\langle \partial_{x_{\ell}}u,\partial_{x_{\ell}}v\right\rangle_{L^2} .
 \]
 $H^1$ is a quaternionic Hilbert space.
 We define
$H_0^1:= H_0^1(\Omega,\hh)$ to be the closure of $\mathcal{C}^\infty_c(\Omega,\mathbb{H})$ in $H^1(\Omega,\hh)$.
 As in the scalar case, see \cite[Theorem 9.17]{BREZIS} if we suppose that $\Omega$ is of class $\mathcal{C}^1$ and we assume that
 $$
 u\in H^1(\Omega,\hh)\cap \mathcal{C}(\overline{\Omega},\hh)
 $$
 then the condition $u=0$ on $\partial \Omega$ is equivalent to $u\in H_0^1(\Omega,\hh)$.
In the general case , for arbitrary functions $u\in H^1(\Omega,\hh)$, we have to consider the trace operator  $u\mapsto u|_{\partial \Omega}$,
and the space $H_0^1$ turns out to be the kernel of the trace operator, i.e.
  \[
 H_0^1:= H_0^1(\Omega,\hh) := \left\{ u\in H^1(\Omega,\hh): u|_{\partial \Omega} = 0 \right\},
 \]
 where $u|_{\partial \Omega} := \mathrm{tr}\, u$ has to be understood in the sense of the  trace operator, see  \cite[p. 315]{BREZIS}.
  Finally,  $H_0^1(\Omega,\hh)$
  is a subspace of $H^1(\Omega,\hh)$ that is a quaternionic Hilbert space itself. We define
 \[
 \|u\|_D^2 := \sum_{\ell = 1}^3 \left\| \partial_{x_{\ell}}u\right\|_{L^2} ^2.
 \]
Due to the regularity of $\partial\Omega$, the Poincar\'{e}-inequality
\[
\|u\|_{L^2} \leq C_{\Omega} \| u\|_{D}
\]
holds and so $\|u\|_{D}$ defines a norm on $H_0^1(\Omega,\hh)$ that is equivalent to $\|u\|_{H^1}$. However, we want to point out that $\|u\|_{D} \neq \|\nabla u\|_{L^2}$ if $\nabla$ denotes the quaternionic nabla operator $\nabla = \partial_{x_1}e_1 + \partial_{x_2}e_2 + \partial_{x_3}e_3$ although this notation is used for Sobolev spaces of real-valued functions.
From the proof of the Poincar\'{e} inequality, it is obvious that one can even choose $C_{\Omega}$ such that also
\begin{equation}\label{SpecPoin}
\|u\|_{L^2} \leq  C_{\Omega} \| \partial_{x_{\ell}}u\|_{L^2},\qquad\ell = 1,2,3.
\end{equation}
The first result we need to show is that all purely imaginary quaternions  are in the $S$-resolvent set of $T$.
 Moreover, we show that the $S$-resolvent operators decay sufficiently fast at infinity so that operator defined in $(S3)$, that is:
\[
P_{\alpha}(T) v = \frac{1}{2\pi} \int_{-j\mathbb{R}} s^{\alpha-1}\,ds_j\, S_R^{-1}(s,T) T v, \ \ \ v\in \mathcal{D}(T)
\]
 for $\alpha\in (0,1)$, turns out to be well defined.
\begin{theorem}\label{Thm1}
Let $\Omega$ be a bounded domain in $\mathbb{R}^3$ with smooth boundary.
Assume that the coefficients  $a_{\ell}:\overline{\Omega} \to \mathbb{R}$, for $\ell = 1,2,3$ of the operator
$$
T = e_1a_1(x_1)\partial_{x_1} + e_2a_2(x_2)\partial_{x_2} + e_3a_3(x_3)\partial_{x_3}
$$
 belong to $\mathcal{C}^1(\overline{\Omega},\rr)$  and suppose that
\[
\inf_{\vx\in\Omega} \left| a_{\ell}(x_{\ell})^2\right| - \frac{\sqrt{C_{\Omega}}}{2}\left\| \partial_{x_{\ell}}a_{\ell}(x_{\ell})^2\right\|_{\infty} > 0, \qquad \ell = 1,2,3,
\]
and
\[
\frac{1}{2} - \frac{1}{2}\|\Phi\|_{\infty}^2C_{\Omega}^2 C_a^2 >0
\]
where $C_\Omega$ is the Poincar\'{e} constant of $\Omega$ satisfying \eqref{SpecPoin} and
\[
\Phi(\vx) :=  \sum_{\ell = 1}^{3}e_{\ell} \partial_{x_{\ell}} a_{\ell}(x_{\ell})\quad\text{and}\quad C_a := \sup_{\substack{\vx\in\Omega \\ \ell = 1,2,3}} \frac{1}{| a_{\ell}(x_{\ell})|} =  \frac{1}{\inf_{\substack{\vx\in\Omega \\ \ell = 1,2,3}} | a_{\ell}(x_{\ell})|}.
\]
Then any $s\in\hh\setminus\{0\}$ with $\Re(s) = 0$ belongs to $\rho_S(T)$ and the  $S$-resolvent operators satisfy the estimates
\begin{equation}\label{SREST}
\left\|S_L^{-1}(s,T)\right\|_{\mathcal{B}(L^2)} \leq \frac{\Theta}{|s|}
\quad\text{and}\quad \left\|S_R^{-1}(s,T)\right\|_{\mathcal{B}(L^2)} \leq \frac{\Theta}{|s|},\qquad \text{if }\Re(s) =0,
\end{equation}
with a constant $\Theta >0$ that does not depend on $s$.\end{theorem}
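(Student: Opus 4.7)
The overall strategy is to invert the pseudo-resolvent $\Q_s(T)$ on $L^2(\Omega,\hh)$ via the quaternionic Lax-Milgram lemma applied on $H_0^1(\Omega,\hh)$, and then to transfer the resulting bound into the required $S$-resolvent estimates. Since the components of $T$ commute and $T_0 = 0$, the anticommutation relations of the $e_\ell$ give $T^2 = -\sum_{\ell=1}^3 A_\ell^2$ with $A_\ell := a_\ell\partial_{x_\ell}$, so for $s = jt$ with $t \neq 0$ the pseudo-resolvent reads $\Q_s(T) = -\sum_\ell A_\ell^2 + t^2\id$: a uniformly elliptic second-order operator whose coefficients carry no imaginary units, to which standard Dirichlet theory applies.

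After integrating by parts on $H_0^1$ (using the formal identity $A_\ell^* = -A_\ell - \partial_{x_\ell}a_\ell$), the associated sesquilinear form becomes
\[
B(u,v) = t^2\langle u,v\rangle_{L^2} + \sum_{\ell=1}^3 \langle A_\ell u, A_\ell v\rangle_{L^2} + \sum_{\ell=1}^3 \langle A_\ell u, (\partial_{x_\ell}a_\ell)\,v\rangle_{L^2}.
\]
Continuity is immediate from the $C^1$-boundedness of the $a_\ell$; the delicate point is coercivity. For the cross term I would use the identity $\Re\langle A_\ell u,(\partial_{x_\ell}a_\ell)u\rangle = \tfrac14\int \partial_{x_\ell}(a_\ell^2)\,\partial_{x_\ell}|u|^2\,d\vx$ (which follows from $\Re(\overline{\partial_{x_\ell}u}\,u) = \tfrac12\partial_{x_\ell}|u|^2$), combined with the pointwise bound $|\partial_{x_\ell}|u|^2|\le 2|u||\partial_{x_\ell}u|$, Cauchy-Schwarz, and the Poincar\'e inequality, to obtain $|\sum_\ell \Re\langle A_\ell u,(\partial_{x_\ell}a_\ell)u\rangle| \le \sum_\ell \tfrac{\sqrt{C_\Omega}}{2}\|\partial_{x_\ell}a_\ell^2\|_\infty\|\partial_{x_\ell}u\|_{L^2}^2$. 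Together with $\sum_\ell \|A_\ell u\|_{L^2}^2 \ge \sum_\ell (\inf a_\ell^2)\|\partial_{x_\ell}u\|_{L^2}^2$, the \emph{first} hypothesis of the theorem then delivers $\Re B(u,u) \ge t^2\|u\|_{L^2}^2 + c\|u\|_D^2$ with $c > 0$. The \emph{second} hypothesis enters through the complementary estimate $\|(\partial_{x_\ell}a_\ell)u\|_{L^2} \le \|\Phi\|_\infty C_\Omega C_a \|A_\ell u\|_{L^2}$ (Poincar\'e together with the definition of $C_a$); Young's inequality then produces $\sum \|A_\ell u\|^2 + \sum\Re\langle A_\ell u,(\partial_{x_\ell}a_\ell)u\rangle \ge \tfrac12(1 - \|\Phi\|_\infty^2 C_\Omega^2 C_a^2)\sum \|A_\ell u\|^2$, an independent coercivity bound that will be used to control $\|Tu\|_{L^2}$.

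Once Lax-Milgram supplies a unique weak solution $u\in H_0^1$ of $\Q_s(T)u = f$, classical elliptic regularity lifts $u$ to $H^2\cap H_0^1 \subset \dom(T^2)$. Testing against $v = u$ and using $\Re\langle f,u\rangle\le \|f\|_{L^2}\|u\|_{L^2}$ yields the two fundamental bounds $\|u\|_{L^2} \le |s|^{-2}\|f\|_{L^2}$ and $\|u\|_D \le C|s|^{-1}\|f\|_{L^2}$. For the left $S$-resolvent, the decomposition $S_L^{-1}(s,T) = \Q_s(T)^{-1}\bar s - T\Q_s(T)^{-1}$ gives $|s|\|u\|_{L^2} \le \|f\|_{L^2}/|s|$ from the first summand and $\|Tu\|_{L^2}^2 \le 3\sum_\ell \|A_\ell u\|_{L^2}^2 \le 3\|a\|_\infty^2 \|u\|_D^2 \le C^2|s|^{-2}\|f\|_{L^2}^2$ from the second, producing $\|S_L^{-1}(s,T)\|_{\boundOP(L^2)} \le \Theta/|s|$ with $\Theta$ depending only on the coefficients and on $\Omega$. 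The bound for $S_R^{-1}(s,T)$ follows analogously from its commutative representation $S_R^{-1}(s,T) = \Q_{c,s}(T)^{-1}s + \sum_\ell A_\ell\Q_{c,s}(T)^{-1}e_\ell$.

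The principal obstacle will be the careful treatment of the quaternion-valued sesquilinear form: the cross term $\sum_\ell\langle A_\ell u,(\partial_{x_\ell}a_\ell)u\rangle$ is not a priori real, so its real part must be extracted in a form compatible with the two hypotheses; and every constant appearing in the chain of Poincar\'e, Cauchy-Schwarz, and Young estimates must be tracked precisely to ensure $\Theta$ does not inherit any $s$-dependence. Beyond this, verifying that the weak solution furnished by Lax-Milgram actually belongs to $\dom(T^2)$, so that the pseudo-resolvent identity $\Q_s(T)u = f$ holds strongly in $L^2$, is a standard application of elliptic regularity under the smoothness assumptions on $\partial\Omega$ and on the coefficients.
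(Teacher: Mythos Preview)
Your proposal is correct and follows the same overall architecture as the paper: integrate by parts to obtain a sesquilinear form on $H_0^1(\Omega,\hh)$, prove continuity and coercivity (the latter from the first hypothesis), apply Lax--Milgram to invert $\Q_s(T)$, and then extract the $S$-resolvent bound from a separate estimate on $\|T\Q_s(T)^{-1}\|$ driven by the second hypothesis. The integration-by-parts identity and the use of Young's inequality with the optimal parameter $\sqrt{C_\Omega}$ for the coercivity step match the paper exactly.

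Where you diverge is in the control of $\|Tu_w\|_{L^2}$. The paper rewrites $b(u_w,u_w)$ a second time as $\int \overline{Tu_w}\,\Phi\,u_w + \|Tu_w\|_{L^2}^2 + s_1^2\|u_w\|_{L^2}^2$ and then needs to dominate $\|u_w\|_{L^2}^2$ by $C_\Omega^2C_a^2\sum_\ell\|Tu_{w,\ell}\|_{L^2}^2$; because this last inequality involves the \emph{real components} $u_{w,\ell}$, the paper must exploit that $\Q_s(T)$ is a scalar operator to decouple the problem componentwise and then recombine. Your route is shorter: you stay with $\sum_\ell\|A_\ell u\|_{L^2}^2$ throughout, bound the cross term via $\|(\partial_{x_\ell}a_\ell)u\|_{L^2}\le \|\Phi\|_\infty C_\Omega C_a\|A_\ell u\|_{L^2}$ and Young, and only at the very end pass to $\|Tu\|_{L^2}^2\le 3\sum_\ell\|A_\ell u\|_{L^2}^2$ by the triangle inequality. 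This avoids the component decomposition entirely, at the cost of a slightly worse constant (the factor $3$). The paper's argument, in return, makes transparent why $\Q_s(T)$ being scalar matters and yields the constant $K$ from the second hypothesis directly in the estimate of $\|Tu_w\|$. Both are valid; yours is more economical.

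One small remark: you invoke elliptic regularity to place the Lax--Milgram solution in $H^2\cap H_0^1\subset\dom(T^2)$, which is indeed needed for $s\in\rho_S(T)$ in the strict sense but is glossed over in the paper; the $C^1$ hypothesis on the $a_\ell$ and smoothness of $\partial\Omega$ make this standard.
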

\begin{proof}
As a first step, we want to show that any $s\in\hh\setminus\{0\}$ with $\Re(s) = 0$ belongs to $\rho_S(T)$, i.e. that 
\begin{align*}
\Q_{s}(T) = & T^2 - 2s_0T + |s|^2\id,
\end{align*}
has a bounded inverse, and we want to do this by applying the  Lax-Milgram lemma.
Since $T$ has commuting components, this operator has for $s = \uI s_1 \in\hh$ the form
\begin{align*}
\Q_{s}(T) = T^2 + s_1^2\id =  - (a_1(x_1)\partial_{x_1})^2 - (a_2(x_2)\partial_{x_2})^2 - (a_3(x_3)\partial_{x_3})^2 + s_1^2\mathcal{I}.
\end{align*}
We want to show that it has an inverse on $L^2(\Omega,\hh)$ that satisfies
\[
\left\|\Q_{s}(T)^{-1}\right\|_{\mathcal{B}(L^2)}  \leq C \frac{1}{s_1^2}
\]
with a constant $C$ that is independent of $s_1$. We consider the bilinear form
\[
b(u,v) := \langle \Q_{s}(T)u,v\rangle_{L^2} = \int_{\Omega} \overline{\Q_{s}(T)u(\vx)} v(\vx)\,d\vx
\]
on $H_0^1(\Omega,\hh)$. We rewrite this as
\begin{align*}
b(u,v) & = s_1^2\int_{\Omega}\overline{u(\vx)}v(\vx)\,d\vx - \sum_{\ell = 1}^3 \int_{\Omega}\overline{(a_{\ell}(x_{\ell})\partial_{x_{\ell}})^2u(\vx)}v(\vx)\,d\vx
\end{align*}
and further
\begin{align*}
&\int_{\Omega}\overline{(a_{\ell}(x_{\ell})\partial_{x_{\ell}})^2u(\vx)}v(\vx)\,d\vx \\
=& \int_{\Omega}\left( (a_{\ell}(x_{\ell})\partial_{x_{\ell}})^2\overline{u(\vx)}\right) v(\vx)\,d\vx \\
=& \int_{\Omega}\left(\partial_{x_{\ell}}a_{\ell}(x_{\ell})\partial_{x_{\ell}}\overline{u(\vx)}\right)a_{\ell}(x_{\ell})v(\vx)\,d\vx.
\end{align*}
Integrating by parts, we find
\begin{align*}
&\int_{\Omega}\overline{(a_{\ell}(x_{\ell})\partial_{x_{\ell}})^2u(\vx)}v(\vx)\,d\vx \\
= & -\int_{\Omega}\frac{1}{2}\left(\partial_{x_{\ell}}a_{\ell}(x_{\ell})^2\right) \left(\partial_{x_{\ell}} \overline{u(\vx)}\right)v(\vx)\,d\vx\\
& - \int_{\Omega} a_{\ell}(x_{\ell})^2 \left(\partial_{x_{\ell}} \overline{u(\vx)}\right) \partial_{x_{\ell}} v(\vx)\,d\vx\\
& + \int_{\partial \Omega} a_{\ell}(x_{\ell})^2 \left(\partial_{x_{\ell}} \overline{u(\vx)}\right)v(\vx)n_{\ell}(\vx)\,dS(\vx),
\end{align*}
where $S$ is the surface measure on $\partial\Omega$ and $n_{\ell}(\vx)$ denotes for $\vx\in\partial\Omega$ the $\ell$-th component of the outward pointing normal. Since $v\in H_{0}^{1}(\Omega,\hh)$, the integral over the boundary is zero and hence we altogether obtain
\begin{align*}
b(u,v) = & s_1^2\int_{\Omega}\overline{u(\vx)}v(\vx)\,d\vx + \sum_{\ell = 1}^3 \int_{\Omega}\frac{1}{2}\left(\partial_{x_{\ell}}a_{\ell}(x_{\ell})^2\right)\left(\partial_{x_{\ell}} \overline{u(\vx)}\right)v(\vx)\,d\vx\\
& + \sum_{\ell = 1}^3  \int_{\Omega} a_{\ell}(x_{\ell})^2 \left(\partial_{x_{\ell}} \overline{u(\vx)}\right) \partial_{x_{\ell}} v(\vx)\,d\vx.
\end{align*}

The bilinear form $b$ is continuous on $H_0^1(\Omega,\hh)$ as
\begin{align*}
|b(u,v)| \leq & s_1^2\int_{\Omega}\left| \overline{u(\vx)}v(\vx)\right| \,d\vx + \sum_{\ell = 1}^3 \frac{1}{2}\left\| \partial_{x_{\ell}}a_{\ell}^2\right\|_{\infty}
\int_{\Omega}\left|\left(\partial_{x_{\ell}} \overline{u(\vx)}\right)v(\vx)\right|\,d\vx\\
& + \sum_{\ell = 1}^3  \left\| a_{\ell}^2 \right\|_{\infty}  \int_{\Omega} \left(\partial_{x_{\ell}} \overline{u(\vx)}\right) \partial_{x_{\ell}} v(\vx)\,d\vx \displaybreak[2]\\
\leq & s_1^2 \| u\|_{L^2}\| v\|_{L^2} + \sum_{\ell = 1}^3 \left\| \partial_{x_{\ell}}a_{\ell}^2\right\|_{\infty} \left\|\partial_{x_{\ell}}u\right\|_{L^2} \|v\|_{L^2} \\
&+ \sum_{\ell=1}^3 \left\| a_{\ell}^2\right\|_{\infty} \left\| \partial_{x_{\ell}} u \right\|_{L^2} \left\|\partial_{x_{\ell}} v \right\|_{L^2}\\
\leq & \left( s_1^2 + \sum_{\ell = 1}^3 \left\|\partial_{x_{\ell}} a_{\ell}^2\right\|_{\infty} + \sum_{\ell = 1}^3 \left\|a_{\ell}^2\right\|_{\infty}\right) \|u\|_{H^1}\|v\|_{H^1},
\end{align*}
where $\|\cdot\|_{\infty}$ denotes the supremum norm. Furthermore, observe that for any $w\in L^2$, the map $\ell_{w}(v):= \langle w,v\rangle_{L^2}$ is a continuous quaternionic linear functional on $H_0^1(\Omega,\hh)$ as
\[
|\ell_{w}(v)| = |\langle w,v\rangle_{L^2}| \leq \|w\|_{L^2} \|v\|_{L^2} \leq \|w\|_{L^2} \|v\|_{H_0^1}.
\]

We can consider $H_0^1(\Omega,\hh)$ also as a real Hilbert space, if we restrict the multiplication with scalars to $\rr$ and endow it with the real scalar product $\langle u,v\rangle_{\rr} = \Re\langle u, v\rangle_{H^1}$. Then $\Re\, b$ is a continuous $\rr$-bilinear form on $H_0^1(\Omega,\hh)$  and $\Re \,\ell_{w}$ is for any $w\in L^2(\Omega,\hh)$ a continuous linear functional on $H_0^1(\Omega,\hh)$. What remains to show in order to apply the Lemma of Lax-Milgram is that $\Re\, b$ is even coercive. We have
\begin{align*}
\Re \, b(u,u)  = &s_1^2 \| u\|_{L^2}^2 + \sum_{\ell = 1}^3 \Re \int_{\Omega}\frac{1}{2}\left( \partial_{x_{\ell}}a_{\ell}(x_{\ell})^2\right) \left(\partial_{x_{\ell}} \overline{u(\vx)}\right) u(\vx)\,d\vx\\
& + \sum_{\ell = 1}^3  \int_{\Omega} a_{\ell}(x_{\ell})^2 |\partial_{x_{\ell}} u(\vx))|^2\,d\vx\\
\geq &s_1^2 \| u\|_{L^2}^2 - \sum_{\ell = 1}^3 \frac{1}{2}\left\| \partial_{x_{\ell}}a_{\ell}(x_{\ell})^2\right\|_{\infty}  \int_{\Omega}\left|\partial_{x_{\ell}} \overline{u(\vx)}\right| |u(\vx)|\,d\vx\\
& + \sum_{\ell = 1}^3 \inf_{\vx\in\Omega} \left| a_{\ell}(x_{\ell})^2\right| \int_{\Omega}  |\partial_{x_{\ell}} u(\vx)|^2\,d\vx.
\end{align*}
Applying the Young inequality, we find for any $\delta >0 $ that
\begin{align*}
\Re \, b(u,u)  \geq& s_1^2 \| u\|_{L^2}^2  +  \sum_{\ell = 1}^3 \inf_{\vx\in\Omega} \left| a_{\ell}(x_{\ell})^2\right| \left\|\partial_{x_{\ell}} u\right\|_{L^2}^2\\
&- \sum_{\ell = 1}^3 \frac{1}{2}\left\| \partial_{x_{\ell}}a_{\ell}(x_{\ell})^2\right\|_{\infty}  \left( \frac{\delta}{2} \left\| \partial_{x_{\ell}}u\right\|_{L^2}^2 + \frac{1}{2\delta}\left\| u\right\|_{L^2}^2 \right).
\end{align*}
Therefore we furthermore get
\begin{align*}
\Re\, b(u,u) \geq & \left(s_1^2  - \frac{1}{4\delta} \sum_{\ell = 1}^3 \left\| \partial_{x_{\ell}} a_{\ell}(x_{\ell})^2\right\|_{\infty}\right) \| u\|_{L^2}^2 \\
&+  \sum_{\ell = 1}^3\left(\inf_{\vx\in\Omega} \left| a_{\ell}(x_{\ell})^2\right| - \frac{\delta}{4}\left\| \partial_{x_{\ell}}a_{\ell}(x_{\ell})^2\right\|_{\infty}\right) \left\| \partial_{x_{\ell}} u \right\|_{L^2}^2 \displaybreak[2]\\
\geq  & s_1^2 \| u\|_{L^2}^2    - \frac{C_{\Omega}}{4\delta} \sum_{\ell = 1}^3 \left\| \partial_{x_{\ell}} a_{\ell}(x_{\ell})^2\right\|_{\infty} \left\|\partial_{x_{\ell}}u\right\|_{L^2}^2 \\
&+\sum_{\ell = 1}^3 \left(\inf_{\vx\in\Omega} \left| a_{\ell}(x_{\ell})^2\right| - \frac{\delta}{4}\left\| \partial_{x_{\ell}}a_{\ell}(x_{\ell})^2\right\|_{\infty}\right)  \left\| \partial_{x_{\ell}} u \right\|_{L^2}^2,
\end{align*}
where $C_{\Omega}$ is the Poincaré constant. The optimal choice $\delta = \sqrt{C_{\Omega}}$ finally yields
\begin{align}
\notag &\Re\, b(u,u)\geq \\
\label{CEST}\geq    & s_1^2 \| u\|_{L^2}^2   + \sum_{\ell = 1}^3 \left(\inf_{\vx\in\Omega} \left| a_{\ell}(x_{\ell})^2\right| - \frac{\sqrt{C_{\Omega}}}{2}\left\| \partial_{x_{\ell}}a_{\ell}(x_{\ell})^2\right\|_{\infty}  \right) \left\| \partial_{x_{\ell}} u \right\|_{L^2}^2 \\
\notag \geq &\kappa(s_1^2) \| u\|_{H^{1}}^2
\end{align}
with the constant
\[
\kappa(s_1^2) := \min\left\{ s_1^2, \min_{1\leq \ell\leq 3} \left\{\inf_{\vx\in\Omega} \left| a_{\ell}(x_{\ell})^2\right| - \frac{\sqrt{C_{\Omega}}}{2}\left\| \partial_{x_{\ell}}a_{\ell}(x_{\ell})^2\right\|_{\infty}  \right\} \right\}.
\]

Let now $w\in L^2(\Omega,\hh)$. The Lemma of Lax-Milgram implies due to the arguments above the existence of  a unique $u_w\in H_{0}^{1}(\Omega,\hh)$ such that
\begin{equation}\label{LaxMil}
\Re\, b(u_w,v) = \Re \,\ell_w(v) = \Re \,\langle w,v\rangle_{L^2}\quad\forall v\in H_{0}^{1}(\Omega,\hh)
\end{equation}
and in turn also
\begin{equation}
\langle \Q_{s}(T) u_w,v\rangle_{L^2(\rr,\hh)} = b(u_w,v) = \langle w,v\rangle_{L^2}\quad\forall v\in H_{0}^{1}(\Omega,\hh) ,
\end{equation}
because
\[
b(u_w,v)  = \Re\, b(u_w,v) + \sum_{\ell=1}^3 \left(\Re\,b(u_w,-ve_{\ell}) \right) e_{\ell}
\]
and
\[
\langle  w,v\rangle_{L^2} = \Re\langle w,v\rangle_{L^2} + \sum_{\ell = 1}^3  \left(\Re \langle w, -v e_{\ell} \rangle_{L^2}\right) e_{\ell}.
\]
 Furthermore, we have
\[
\|u_w \|_{L^2} \leq \|u_w \|_{H^1} \leq \frac{1}{\kappa(s_1^2)} \| w\|_{L^2}.
\]
The mapping $S:w\to u_{w}$ is therefore a bounded linear mapping on $L^2(\Omega,\hh)$  and so the operator $\Q_{s}(T)$ has a bounded inverse on $L^2(\Omega,\hh)$ with range in $H_0^1(\Omega,\hh)$. From the estimate \eqref{CEST}, we furthermore conclude
\begin{align*}
s_1^2 \|u_w\|_{L^2}^{2} \leq & \Re\, b(u_w,u_w) \leq  |b(u_w,u_w)| =  \left|\langle w,u_w\rangle_{L^2}\right| \leq \|w\|_{L^2} \|u_w\|_{L^2}.
\end{align*}
Therefore, we have
\begin{align*}
\left\|\Q_{s}(T)^{-1} w\right\|_{L^2} = \left\|u_w\right\|_{L^2}  \leq \frac{1}{s_1^2} \left\| w \right\|_{L^2}
\end{align*}
and so
\begin{equation}\label{QQEST}
\left\| \Q_{s}(T)^{-1}\right\|_{\mathcal{B}(L^2)} \leq \frac{1}{s_1^2}.
\end{equation}

Using this estimate, we can now show that the $S$-resolvent of $T$ decays fast enough along the set of purely imaginary quaternions. For any $v\in H_0^1(\Omega,\hh)$, we have that
\begin{align*}
b(u_w,v) =& \langle  \Q_{s}(T) u_w, v \rangle_{L^2} = \left\langle T^2 u_{w},v\right\rangle_{L^2} + s_1^2 \langle u_w,v\rangle_{L^2}
\end{align*}
The first term can be expressed as
\begin{gather*}
 \langle T^2 u_w, v\rangle_{L^2} = \int_{\Omega}\overline{\left(T^2 u_w\right)(\vx)} v(\vx)\,d\vx \\
= \sum_{\ell = 1}^{3}  \int_{\Omega}   a_{\ell}(x_{\ell})\partial_{x_{\ell}} \overline{(Tu_w)(\vx)} (-e_{\ell}) v(\vx)\,d\vx.
\end{gather*}
Integration by parts yields
\begin{align*}
 \left\langle T^2 u_{w},v\right\rangle_{L^2} =  &  \sum_{\ell = 1}^{3}  \int_{\Omega}   \overline{(Tu_{w})(\vx)}\, e_{\ell} \partial_{x_{\ell}} \left(a_{\ell}(x_{\ell})v(\vx)\right)\,d\vx \\
&+ \sum_{\ell = 1}^{3}  \int_{\partial \Omega}   \overline{(Tu_{w})(\vx)}\, n_{\ell}(\vx)(-e_{\ell}) a_{\ell}(x_{\ell})v(\vx)\,dS(\vx)\\
=&    \sum_{\ell = 1}^{3}  \int_{\Omega}   \overline{(Tu_{w})(\vx)} e_{\ell} \left(\partial_{x_{\ell}} a_{\ell}(x_{\ell})\right)v(\vx)\,d\vx\\
&+     \sum_{\ell = 1}^{3}  \int_{\Omega}   \overline{(Tu_{w})(\vx)} e_{\ell} a_{\ell}(x_{\ell})\partial_{x_{\ell}}v(\vx)\,d\vx\\
=&      \int_{\Omega}   \overline{(Tu_{w})(\vx)}  \left( \sum_{\ell = 1}^{3}e_{\ell}\partial_{x_{\ell}} a_{\ell}(x_{\ell})\right)v(\vx)\,d\vx\\
& +     \int_{\Omega}   \overline{(Tu_{w})(\vx)} Tv(\vx)\,d\vx,
\end{align*}
where the integral over the boundary vanishes as $v(\vx) = 0$ on $\partial \Omega$ because $v\in H_0^1(\Omega,\hh)$. We find that
\begin{align*}
b(u_w,v) = & \int_{\Omega}   \overline{(Tu_w)(\vx)} \Phi(\vx)v(\vx)\,d\vx + \langle Tu_w, Tv\rangle_{L^2} + s_1^2 \langle u_w,v\rangle_{L^2}.
\end{align*}
with
\[
\Phi(\vx) :=  \sum_{\ell = 1}^{3}e_{\ell} \partial_{x_{\ell}} a_{\ell}(x_{\ell}).
\]
Choosing $v = u_w$ yields
\begin{align}\label{LEM}
b(u_w,u_w) =  \int_{\Omega}   \overline{(Tu_w)(\vx)} \Phi(\vx) u_w(\vx)\,d\vx + \| Tu_w\|_{L^2}^2 + s_1^2 \|u_w\|_{L^2}^2.
\end{align}
We hence have
\begin{align*}
|b(u_w,u_w)| \geq& \| Tu_w\|_{L^2}^2 + s_1^2 \|u_w\|_{L^2}^2 -  \int_{\Omega}  \left| \overline{(Tu_w)(\vx)} \Phi(\vx) u_w(\vx)\right|\,d\vx \\
\geq&  \| Tu_w\|_{L^2}^2 + s_1^2 \|u_w\|_{L^2}^2 -  \int_{\Omega}  \left| \overline{(Tu_w)(\vx)} \right| \left\| \Phi\right\|_{\infty}\left| u_w(\vx)\right|\,d\vx \\
\geq&  \frac{1}{2} \left\|Tu_w\right\|_{L^2}^2 + s_1^2 \|u_w\|_{L^2}^2   - \frac{1}{2}\left\| \Phi\right\|_{\infty}^2 \left\| u_w \right\|_{L^2}^2,
\end{align*}
where the last identity follows from the Young inequality. In order to estimate the term $\left\| u_w \right\|_{L^2}^2$, we write
$u_{w}(\vx) = u_{w,0}(\vx) + \sum_{\ell=1}^3 u_{w,\ell}(\vx)e_{\ell}$ with $u_{w,\ell}(\vx)\in\rr$. Then
\begin{align*}
\|u_{w}\|_{L^2}^2 =& \sum_{\ell = 0}^3 \|u_{w,\ell}\|_{L^2}^2 \leq C_{\Omega}^2 \sum_{\ell=0}^3\|u_{w,\ell}\|_{D}^2 \\
=& C_{\Omega}^2 \sum_{\ell=0}^3\sum_{k=1}^3\left\|\partial_{x_k}u_{w,\ell}\right\|_{L^2}^2  \leq  C_{\Omega}^2C_a^2 \sum_{\ell=0}^3\sum_{k=1}^3\left\|a_{k}\partial_{x_k}u_{w,\ell}\right\|_{L^2}^2
\end{align*}
with
\[
C_a := \sup_{\substack{\vx\in\Omega \\ \ell = 1,2,3}} \frac{1}{| a_{\ell}(x_{\ell})|} =  \frac{1}{\inf_{\substack{\vx\in\Omega \\ \ell = 1,2,3}} | a_{\ell}(x_{\ell})|}.
\]
Since $u_{w,\ell}$ is real-valued, we furthermore find that
\begin{align*}
\| Tu_{w,\ell}\|_{L^2}^2 =& \int_{\Omega}\left| \sum_{k=1}^3e_ka_k(\vx) \partial_{x_k}u_{w,{\ell}}(\vx)\right|^2\,d\vx\\
=& \sum_{k=1}^3 \int_{\Omega} \left| a_k(\vx) \partial_{x_k}u_{w,{\ell}}(\vx)\right|^2\,d\vx = \sum_{k=1}^3\left\| a_k\partial_{x_k}u_{w,{\ell}}\right\|_{L^2}^2
\end{align*}
and so
\[
\|u_{w}\|_{L^2}^2 \leq C_{\Omega}^2C_{a}^2\sum_{\ell=0}^3\|Tu_{w,\ell}\|_{L^2}^2.
\]
Altogether, we conclude that
\begin{equation}\label{rumpfi}
\begin{gathered}
\frac{1}{2}\|Tu_{w}\|_{L^2}^2 + s_1^2\|u_{w}\|_{L^2}^2 - \frac{1}{2}\|\Phi\|_{\infty}^2C_{\Omega}^2C_{a}^2\sum_{\ell=0}^3\|Tu_{w,\ell}\|_{L^2}^2\\
\leq |b(u_w,u_w)| = |\langle w,u_w\rangle_{L^2}|\leq \|w\|_{L^2}\|u_w\|_{L^2}.
\end{gathered}
\end{equation}

We observe now that the operator $\Q_{s}(T)$ is a scalar operator and hence maps real-valued functions to real-valued functions so that for $r = 0,\ldots,3$
\begin{gather*}
b(u_{w},u_{w,r}) = \left\langle \Q_{s}(T)u_{w},u_{w,r}\right\rangle_{L^2} \\
= \left\langle \Q_{s}(T)u_{w,0},u_{w,r}\right\rangle_{L^2} +  \sum_{\ell=1}^3(-e_{\ell})\left\langle \Q_{s}(T)u_{w,\ell},u_{w,r}\right\rangle_{L^2}
\end{gather*}
with $\left\langle \Q_{s}(T)u_{w,0},u_{w,r}\right\rangle_{L^2}\in\rr$ for $\ell = 1,2,3$. If $w(x) = w_0(\vx) + \sum_{\ell=1}^3 w_{\ell}(\vx)e_{\ell}$ with $w_{\ell}(\vx)\in\rr$, we hence conclude from
\begin{gather*}
\left\langle \Q_{s}(T)u_{w,0},u_{w,r}\right\rangle_{L^2} +  \sum_{\ell=1}^3(-e_{\ell})\left\langle \Q_{s}(T)u_{w,\ell},u_{w,r}\right\rangle_{L^2}  = b(u_{w},u_{w,r}) \\
= \langle w,u_{w,r}\rangle_{L^2} = \langle w_0,u_{w,r}\rangle_{L^2} + \sum_{\ell=1}^3(-e_{\ell})\langle w_{\ell},u_{w,r}\rangle_{L^2}
\end{gather*}
that for $\ell,r=0,\ldots,4$
\[
b(u_{w,\ell},u_{w,r}) = \langle\Q_{s}(T) u_{w,r},u_{w,r}\rangle_L^2 = \langle w_{\ell},u_{w,r}\rangle_{L^2}
\]
and in particular for $\ell=0,\ldots,4$
\[
b(u_{w,\ell},u_{w,\ell}) = \langle\Q_{s}(T) u_{w,\ell},u_{w,\ell}\rangle_L^2 = \langle w_{\ell},u_{w,\ell}\rangle_{L^2}.
\]
Repeating the above arguments, we find that \eqref{rumpfi} also holds for $u_{w,\ell}$ instead of $u_w$. However, since $u_{w,\ell}$ is real-valued and has hence only one component, this estimate then reads as
\begin{equation}\label{rumpfiL}
\begin{gathered}
\frac{1}{2}\|Tu_{w,\ell}\|_{L^2}^2 + s_1^2\|u_{w,\ell}\|_{L^2}^2 - \frac{1}{2}\|\Phi\|_{\infty}^2C_{\Omega}^2C_{a}^2\|Tu_{w,\ell}\|_L^2
\\
\leq |b(u_{w,\ell},u_{w,\ell})| = |\langle w_{\ell},u_{w,\ell}\rangle_{L^2}|
\\
\leq \|w_{\ell}\|_{L^2}\|u_{w,\ell}\|_{L^2} \leq \|w\|_{L^2}\|u_{w}\|_{L^2}.
\end{gathered}
\end{equation}
If we set
\[
K:=  \frac{1}{2} - \frac{1}{2}\|\Phi\|_{\infty}^2C_{\Omega}^2 C_a^2 >0,
\]
then \eqref{rumpfiL} turns into
\[
K \|Tu_{w,\ell}\|_{L^2}^2 + s_1^2\|u_{w,\ell}\|_{L^2}^2 \leq \|w\|_{L^2}\|u_w\|_{L^2},
\]
which implies in particular
\[
\|Tu_{w,\ell}\|_{L^2}^2\leq \frac{1}{K}\|w\|_{L^2}\|u_w\|_{L^2}.
\]
From \eqref{rumpfi}, we finally conclude that
\begin{equation}\label{rumpfi22}
\begin{gathered}
\frac{1}{2}\|Tu_{w}\|_{L^2}^2 + s_1^2\|u_{w}\|_{L^2}^2 \leq  \frac{1}{2}\|\Phi\|_{\infty}^2C_{\Omega}^2C_{a}^2\sum_{\ell=0}^3\|Tu_{w,\ell} \| +  \|w\|_{L^2}\|u_w\|_{L^2}\\
 \leq \left( 1 + \frac{1}{2}\|\Phi\|_{\infty}^2C_{\Omega}^2C_{a}^2\frac{4}{K}\right) \|w\|_{L^2}\|u_w\|_{L^2}
\end{gathered}
\end{equation}
so that, after setting
\[
\tau := \frac{1}{2}\left( 1 + \frac{1}{2}\|\Phi\|_{\infty}^2C_{\Omega}^2C_{a}^2\frac{4}{K}\right)^{-1} >0,
\]
we have
\[
\tau\|Tu_{w}\|_{L^2}^2 \leq \|w\|_{L^2}\|u_w\|_{L^2}.
\]
Since $u_w = \Q_{s}(T)^{-1}w$ we find because of \eqref{QQEST} that $\|u_w\|_{L^2}\leq \frac{1}{s_1^2}\|w\|_{L^2}$  so that in turn
\[
\tau\|Tu_{w}\|_{L^2}^2 \leq \frac{1}{s_1^2} \|w\|_{L^2}^2.
\]
Hence, we have
\[
\left\| T\Q_{s}(T)^{-1}w\right\|_{L^2} = \|Tu_{w}\|_{L^2} \leq \frac{1}{\sqrt{\tau}s_1} \|w\|_{L^2}.
\]
and so
\[
\left\| T \Q_{s}(T)^{-1}\right\| \leq \frac{1}{\sqrt{\tau}s_1}.
\]
If we set
\[
\Theta := 2\max\left\{1,\frac{1}{\sqrt{\tau}}\right\},
\]
then the above estimate and \eqref{QQEST} yield
\begin{equation*}
\begin{split}
\left\|S_R^{-1}(s,T)\right\| =& \left\|(T - \overline{s}\id)\Q_{s}(T)^{-1}\right\| \\
\leq& \left\|T\Q_{s}(T)^{-1}\right\| + \left\|\overline{s}\Q_{s}(T)^{-1}\right\| \leq\frac{ \Theta}{s_1}
\end{split}
\end{equation*}
and
\begin{equation*}
\begin{split}
\left\|S_L^{-1}(s,T)\right\| =& \left\|T\Q_{s}(T)^{-1} - \Q_{s}(T)^{-1}\overline{s}\right\| \\
\leq& \left\|T\Q_{s}(T)^{-1}\right\| + \left\| \Q_{s}(T)^{-1}\overline{s}\right\| \leq\frac{ \Theta}{s_1}
\end{split}
\end{equation*}
for any $s = \uI s_1\in\hh$.
\end{proof}

Thanks to the previous results we are now in the position to prove our crucial results.
\begin{theorem}\label{MDUE}
Let the coefficients of $T$ be as in Theorem~\ref{Thm1} and let $\alpha\in(0,1)$. For any  $v\in\dom(T)$, the integral
\[
P_{\alpha}(T)v := \frac{1}{2\pi}\int_{-\uI\rr} s^{\alpha-1}\,ds_{\uI}\,S_{R}^{-1}(s,T) Tv.
\]
converges absolutely in $L^2(\Omega,\hh)$.
\end{theorem}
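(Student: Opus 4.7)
The plan is to parametrize the path of integration $-\uI\rr$ by $s = -\uI t$ with $t\in\rr$, so that $|s|=|t|$ and $|ds_{\uI}| = dt$, reducing absolute convergence in $L^2(\Omega,\hh)$ to showing
\[
\int_{\rr} |t|^{\alpha-1}\left\|S_R^{-1}(-\uI t,T)\,Tv\right\|_{L^2}\,dt < \infty.
\]
The only available estimate, $\|S_R^{-1}(s,T)\|\le \Theta/|s|$ from Theorem~\ref{Thm1}, combined with $\|Tv\|_{L^2}$ would give an integrand of order $|t|^{\alpha-2}$, which is integrable at infinity (since $\alpha<1$) but fails at the origin (since $\alpha-2<-1$). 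So the key step is a better estimate near $s=0$, obtained from a resolvent-type identity.

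First I will establish the identity
\[
S_R^{-1}(s,T)\,Tv = s\,S_R^{-1}(s,T)v - v, \qquad v\in\dom(T),
\]
for any $s\in\rho_S(T)$. Starting from $S_R^{-1}(s,T) = -(T-\overline{s}\id)\Q_s(T)^{-1}$, one commutes $T$ past $\Q_s(T)^{-1}$ on $\dom(T)$ (which is justified because $\Q_s(T)$ is a polynomial in $T$, and because elliptic regularity for the second-order operator $\Q_{c,s}(T)$ guarantees that $\Q_s(T)^{-1}v\in\dom(T^3)$ whenever $v\in\dom(T)$ in the concrete PDE setting of the theorem), then substitutes $T^2 = \Q_s(T)+2\Re(s)T-|s|^2\id$ and uses $\overline{s}-2\Re(s)=-s$. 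This is the main technical point; justifying the domain manipulations rigorously is where the most care is needed, though no deep new ideas are required beyond those already used in Theorem~\ref{Thm1}.

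With the identity in hand, the proof closes by splitting the path at $|t|=1$ and applying the two complementary bounds. For $|t|\ge 1$, Theorem~\ref{Thm1} gives
\[
\|S_R^{-1}(-\uI t,T)Tv\|_{L^2} \le \frac{\Theta}{|t|}\|Tv\|_{L^2},
\]
yielding an integrand controlled by $\Theta\|Tv\|_{L^2}|t|^{\alpha-2}$, which is integrable on $\{|t|\ge 1\}$ because $\alpha-2<-1$. For $|t|\le 1$, the identity and Theorem~\ref{Thm1} combine to give
\[
\|S_R^{-1}(-\uI t,T)Tv\|_{L^2} \le |t|\cdot\frac{\Theta}{|t|}\|v\|_{L^2} + \|v\|_{L^2} = (\Theta+1)\|v\|_{L^2},
\]
so the integrand is bounded by $(\Theta+1)\|v\|_{L^2}|t|^{\alpha-1}$, which is integrable near the origin because $\alpha-1>-1$. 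Summing the two pieces yields a finite majorant
\[
\int_{\rr}|t|^{\alpha-1}\|S_R^{-1}(-\uI t,T)Tv\|_{L^2}\,dt \le 2(\Theta+1)\|v\|_{L^2}\int_0^1 t^{\alpha-1}\,dt + 2\Theta\|Tv\|_{L^2}\int_1^{\infty} t^{\alpha-2}\,dt,
\]
establishing absolute convergence of $P_\alpha(T)v$ in $L^2(\Omega,\hh)$.
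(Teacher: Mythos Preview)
Your proposal is correct and follows essentially the same approach as the paper: invoke the right $S$-resolvent identity $S_R^{-1}(s,T)Tv = sS_R^{-1}(s,T)v - v$, split the integral at $|t|=1$, and use the decay estimate $\|S_R^{-1}(s,T)\|\le \Theta/|s|$ directly for large $|t|$ and in combination with the identity for small $|t|$. The only minor difference is that the paper treats the resolvent identity as a known fact from the general theory of the $S$-functional calculus rather than rederiving it via domain manipulations and elliptic regularity.
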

\begin{proof}
The right $S$-resolvent equation implies
\[
S_{R}^{-1}(s,T)Tv = sS_{R}^{-1}(s,T)v - v,\qquad \forall v\in\dom(T)
\]
and so
\begin{align*}
& \frac{1}{2\pi}\int_{-\uI\rr} \left\|s^{\alpha-1}\,ds_{\uI}\,S_{R}^{-1}(s,T) Tv\right\|\\
\leq & \frac{1}{2\pi}\int_{-\infty}^{-1} |t|^{\alpha-1} \left\| S_{R}^{-1}(-\uI t,T) \right\| \left\|Tv\right\| \,dt\\
&+
\frac{1}{2\pi}\int_{-1}^{1} |t|^{\alpha-1} \left\| (-\uI t) S_{R}^{-1}(-\uI t,T)v - v\right\| \,dt\\
&+
\frac{1}{2\pi}\int_{1}^{+\infty} t^{\alpha-1} \left\| S_{R}^{-1}(\uI t,T) \right\| \left\|Tv\right\|\,dt.
\end{align*}
As $\alpha\in(0,1)$, the estimate \eqref{SREST} now yields
\begin{align*}
& \frac{1}{2\pi}\int_{-\uI\rr} \left\|s^{\alpha-1}\,ds_{\uI}\,S_{R}^{-1}(s,T) Tv\right\|\\
\leq & \frac{1}{2\pi}\int_{1}^{+\infty} t^{\alpha-1}\frac{\Theta}{t}\left\|Tv\right\| \,dt +
\frac{1}{2\pi}\int_{-1}^{1} |t|^{\alpha-1} \left(|t| \frac{\Theta}{|t|} + 1\right)\|v\| \,dt\\
&+
\frac{1}{2\pi}\int_{1}^{+\infty} t^{\alpha-1} \frac{\Theta}{t} \left\|Tv\right\|\,dt < +\infty.
\end{align*}
 \end{proof}

 We conclude this section by observing that the above result can be proved also in different function spaces, not only in the Hilbert setting, using different techniques.
 Once that the above result is established, this theory 
  opens the way to the study of the corresponding fractional evolution problem.  This field is now under investigation.

\end{document}